\newtheorem{theorem}{Theorem}[section]
\newtheorem{corollary}{Corollary}
\newtheorem{proposition}{Proposition}[section]
 \numberwithin{equation}{section}
\newtheorem{remark}{Remark}[section]
\newcommand{\keywords}
\def\bc{\begin{center}}       \def\ec{\end{center}}
\def\ba{\begin{array}}        \def\ea{\end{array}}
\def\be{\begin{equation}}     \def\ee{\end{equation}}
\def\bea{\begin{eqnarray}}    \def\eea{\end{eqnarray}}
\def\beaa{\begin{eqnarray*}}  \def\eeaa{\end{eqnarray*}}
\def\mathbb{\Bbb}
\begin{document}

\title{\bf Stationary patterns and their selection mechanism of Urban crime models with heterogeneous near--repeat victimization effect}
\author{Yu Gu\thanks{{\tt guyu@2011.swufe.edu.cn}}, Qi Wang \thanks{{\tt qwang@swufe.edu.cn}, corresponding author.}, Guangzeng Yi\thanks{{\tt guangzengyi@2011.swufe.edu.cn}}\\
Department of Mathematics\\
Southwestern University of Finance and Economics\\
555 Liutai Ave, Wenjiang, Chengdu, Sichuan 611130, China
}

\date{}
\maketitle

\abstract
In this paper, we study two PDEs that generalize the urban crime model proposed by Short \emph{et al}. [Math. Models Methods Appl. Sci., 18 (2008), pp. 1249--1267].  Our modifications are made under assumption of the spatial heterogeneity of both the near--repeat victimization effect and the dispersal strategy of criminal agents.  We investigate pattern formations in the reaction--advection--diffusion systems with nonlinear diffusion over multi--dimensional bounded domains subject to homogeneous Neumann boundary conditions.  It is shown that the positive homogeneous steady state loses its stability as the intrinsic near--repeat victimization rate $\epsilon$ decreases and spatially nonconstant solutions emerge through bifurcation.  Moreover, we find the wavemode selection mechanism through rigorous stability analysis of these nontrivial patterns, which shows that the only stable pattern must have wavenumber that maximizes the bifurcation value.  Based on this wavemode selection mechanism, we will be able to precisely predict the formation of stable aggregates of the house attractiveness and criminal population density, at least when the diffusion rate $\epsilon$ is around the principal bifurcation value.  Our theoretical results also suggest that large domains support more stable aggregates than small domains.  Finally, we perform extensive numerical simulations over 1D intervals and 2D squares to illustrate and verify our theoretical findings.  Our numerics also include some interesting phenomena such as the merging of two interior spikes and the emerging of new spikes, etc.  These nontrivial solutions can model the well observed aggregation phenomenon in urban criminal activities.

\textbf{Keywords: Urban crime model, nonlinear diffusion, pattern formation, stability analysis.}

\textbf{AMS Subject Classification: 35B36, 35J57, 91D25, 65M99}

\section{Introduction}
One of the most noticeable and most often observed phenomena in urban criminal activities is the clustering of crime data.  For example, as reported in \cite{BW,B}, certain neighbourhoods have a higher propensity to crimes than others though crimes may occur everywhere in the community.  Many works in literature are devoted to the understanding of the spatially inhomogeneous distributions of the criminal activities.  Great efforts are made to investigate the impact of social forces on such empirical observations and theories proposed for this purpose include social disorganization \cite{KW,PK,SG}, subculture \cite{F,WF} and conflict theories \cite{C,CBH} etc.

It is widely believed that human behaviors are too complex to be explained or predicted individually through mathematical models, however there are regularities of human behaviors that can be modeled and understood mathematically at group level.  In the pioneering work of \cite{SDP}, Short \emph{et al}. proposed mathematical frameworks based on a 2D lattice system and its continuum counterpart to model and simulate the urban residential burglars.  The modeling there incorporates an important empirical observation Broken windows effect \cite{SR,WK}, i.e., a building with a few broken windows which remain unrepaired attracts vandals to break a few more windows and eventually to break the whole building.

In its dimensionless form, the continuum model in \cite{SDP} is a strongly coupled reaction--advection--diffusion system that reads as follows
\begin{equation}\label{11}
\left\{
\begin{array}{ll}
\frac{\partial A}{\partial t}=\eta \Delta A-A+A^0+\rho A,\\
\frac{\partial\rho}{\partial t}=\nabla \cdot (\nabla \rho-2\rho \nabla\log A) -\rho A+\bar B,
\end{array}
\right.
\end{equation}
where $(A,\rho)=(A(x,t),\rho(x,t))$ and they denote house attractiveness and criminal population density at space--time location $(x,t)$ respectively, $\eta$ and $\bar B $ are positive constants, and $A^0$ is a nonnegative function of $x$.  (\ref{11}) is similar as the Keller--Segel models of chemotaxis in which bacteria move along the gradient of the chemical concentration.  See the survey papers \cite{Ho,HP} and \cite{KS,KS1,KS2,SWW} for works on the chemotaxis models.

It is demonstrated in \cite{SDP} that nonlinear system (\ref{11}) and its discrete counterpart can develop very complex spatial--temporal patterns.  More interestingly, the pattern formations can qualitatively capture the dynamics of residential burglar aggregates, or the so--called the crime hotspots, presented in the form of spiky steady states.  Crime hotspots describe the clustering of crime data in urban residential burglaries, i.e., there are neighbourhoods with higher crime rates surrounded by neighbourhoods with lower crime rates.  Moreover, a linear stability has been carried out to determine the parameter values that will lead to the creation of stable hotspots.

In this paper, we investigate the formation of spatially inhomogeneous patterns for following systems with heterogeneous diffusion rate $\eta(A)$ and perception function $f(A)$
\begin{equation}\label{12}
\left\{
\begin{array}{ll}
\frac{\partial A}{\partial t}=\epsilon \Delta(\eta(A)( A-A^0))-A+A^0+\rho A, &x\in \Omega,t>0,\\
\frac{\partial\rho}{\partial t}=\nabla\cdot(\nabla\rho-2\rho\nabla \log f(A))-\lambda_0\rho A+\lambda_0\bar B,  &x\in\Omega,t>0,
\end{array}
\right.
\end{equation}
and
\begin{equation}\label{13}
\left\{
\begin{array}{ll}
\frac{\partial A}{\partial t}=\epsilon\nabla\cdot\left(\eta^2(A)~\nabla\frac{A-A^0}{\eta(A)}\right)-A+A^0+\rho A, &x\in \Omega,t>0,\\
\frac{\partial\rho}{\partial t}=\nabla\cdot(\nabla\rho-2\rho\nabla \log f(A))-\lambda_0\rho A+\lambda_0\bar B, &x\in  \Omega,t>0,
\end{array}
\right.
\end{equation}
subject to homogeneous Neumann boundary conditions
\begin{equation}\label{14}
\partial_\textbf{n} A=\partial_\textbf{n} \rho=0, x\in \partial \Omega, t>0,
\end{equation}
where $\Omega\subset \mathbb R^N$, $N\geq1$, is a bounded domain with piecewise smooth boundary $\partial \Omega$; $\textbf{n}$ is the unit outer normal of the boundary.  $\eta (A)$ is a continuously differentiable function satisfying $\eta(A)>0,\eta'(A)>0$ and $\eta(A)\geq \eta'(A)A$ for all $A>0$.  $f(A)>0$ is also assumed to a continuously differentiable function.

The attractiveness $A(x,t)$ consists of two components $A(x,t)\equiv A^0+B(x,t)$, where $A^0$ represents the static part and $B$ measures the dynamic part.   Diffusion rate $\eta(A)$ is used to interpret the strength of the empirically observed repeat/near--repeat victimization effect \cite{ACCGT,JB,JBH}, which states that a house and its neighbourhoods become more likely to be burglarized soon after a burglary.  $f(A)$ is a function that measures the perception of house attractiveness hence the advection term describes the directed dispersal flux of the criminal agents to the attractive sites over the community with respect to the perception.  $\epsilon$ is a positive constant that measures the maximum strength of the near--repeat victimization effect and we shall it the intrinsic near--repeat victimization rate.  The constants $\lambda_0$, $A^0$ and $\bar B$ are assumed positive, where $\lambda_0$ measures the probability that a burglar going back home after committing a crime, $A^0$ and $\bar B$ represent the static value and spatial average value of the attractiveness of each house respectively.

The work in \cite{SDP} has triggered great interest of many scholars in the theoretical analysis and numerical studies of urban crime activities and other similar sociological phenomena.  To model the empirically well--observed hotspots from the continuum models, one way is to construct stationary solutions that have concentration structures.  In \cite{SBB}, Short \emph{et al}. performed weak nonlinear analysis based on perturbation arguments to establish the emergence and suppression of stationary hotspot patterns of (\ref{11}).  Cantrell \emph{et al.} \cite{CCM} applied the local bifurcation theory of Crandall--Rabinowitz \cite{CR} and its developed version \cite{SW} to investigate the existence and stability of nonconstant positive steady states of (\ref{11}).  By Leray--Schauder degree argument, Gacia--Huidobro \emph{et al}. proved the existence of nonconstant positive steady states to 1D burglar model in \cite{GMM2} and to its general form in \cite{GMM}.  In \cite{KWW}, Kolokolnikov \emph{et al.} constructed steady state with hotspots and investigated their stabilities when the region is a one--dimensional or two--dimensional domain.  Tse and Ward \cite{TW} applied a combination of asymptotic analysis and numerical path--following methods to construct hotspot steady states and analyzed their bifurcation properties as the diffusivity of criminals varies.  Localized patterns have been studied in \cite{LO}.  For the time--dependent system (\ref{11}), Rodri\'guez and Bertozzi \cite{RB} established the existence and uniqueness of local solutions; Rodri\'guez \cite{Ro} proved the global existence of fully--parabolic system (\ref{11}) and that of its counterpart with the logarithmic sensitivity $\nabla \log A$ replaced by $\nabla A$, provided that the initial criminal population is small.  Berestycki \emph{et al}. \cite{BRR} studied a similar system and obtained its traveling wave solutions that connect zones with no criminal activity and zones with crime hotspots.

It is worthwhile to point out that the agent--based model (\ref{11}) in \cite{SDP} has been extended in several aspects by various authors.  For example, effects of police actions on the spatial distribution of criminal activities are taken into account in \cite{JBC,Ri,SBB,ZSB}, etc.  Chaturapruek \emph{et al}. \cite{CBYKM} proposed a crime model with the criminals following a biased L\'evy flight.  It is quite difficult for us to list all the works and we refer to \cite{BLSW,MPS,NW,PDH,RR,SMBT,WMBB} and the references therein for the development of models with the same or similar sociological backgrounds.  In this paper, we investigate the formation of nonconstant positive steady states to models (\ref{12}) and (\ref{13}) subject to (\ref{14}).  We want to remark that the goal of this paper is not to present mathematical models to predict or replicate the behaviors of a single human individuals, but to present a systematic treatment of reaction--advection--diffusion systems modeling urban criminal dynamics, in particular the formation of spatially nontrivial patterns.  In particular, all of our theoretical results hold for (\ref{11}) by taking $\eta(A)\equiv \eta$, $\lambda_0=1$ and $f(A)=A$ in (\ref{12}) or (\ref{13}).  On the other hand, we also want to point out that, the nonlinearity in diffusion $\eta(A)$ and perception function $f(A)$, in particular when degeneracy is allowed, can make these models even richer in spatial--temporal dynamics.  For example, there are many works on chemotaxis models including but not limited to formation of singularity, compacted supported steady states, \emph{etc}.  We would like to expect more works on the effect of such nonlinearity on the formation and evolution of crime hotspots.

Our paper is organized as follows.  In Section \ref{section2}, we derive the continuum models (\ref{12}) and (\ref{13}) following the microscopic--macroscopic approach in \cite{SDP}.  In Section \ref{section3}, we carry out the linearized stability analysis of the homogeneous steady state $(\bar A,\bar \rho)$ and show that small diffusion rate $\epsilon$ tends to destabilize the constant equilibrium--see Proposition \ref{proposition31}.  Section \ref{section4} and \ref{section5} are devoted to studying the nonconstant positive steady states of (\ref{12}) and (\ref{13}).  By Crandall--Rabinowitz bifurcation theory, we rigorously establish the existence and stability of nonconstant steady states--see Theorem \ref{theorem41}, Theorem \ref{theorem42} and Theorem \ref{theorem51}.  We then numerically solve the crime models (\ref{12}) and (\ref{13}) in Section \ref{section6} to illustrate the formation of clustering criminal data in the form of hotspots, hotstripes, etc.  Finally we include our remarks and propose some future problems in Section \ref{section7}.

\section{Derivation of the models}\label{section2}
To understand the relevance of the mathematics presented in \cite{SDP} and the current work to the urban criminal activities, it is helpful to demonstrate how models (\ref{12}) and (\ref{13}) are derived.  We shall extend the urban crime model in \cite{SDP} to nonlinear diffusion systems (\ref{12}) and (\ref{13}) by considering spatially heterogeneous near--repeat victimization and dispersal strategies of the criminal agents.  Moreover, we take into account the nonlinear human perception of a physical stimulus.

\subsection{Derivation of the house attractiveness equation}
Consider a 2D lattice with constant spacing $l$.  Each house is located at site $s=(i,j)$, $i,j\in \mathbb N$, with four neighbouring sites $s'\in N(s)=\{(il,(j\pm1) l),((i\pm1)l,jl)\}$.  Denote attractiveness of the target house $s$ at time $t$ by $A_s(t)$ and the number of criminals by $n_s(t)$ respectively.  We want to mention that the attractiveness $A_s(t)$ is defined in a comprehensive way as in \cite{SDP} and it consists of two part $A_s(t)=A_0+B_s(t)$, $A_0$ is static and $B_s$ is dynamic.

According to the near--repeat victimization effect \cite{SDP}, the dynamic attractiveness $B_{s'}(t)$ to the neighbouring site of $s$ increases each time after it is burglarized, and the increase in $B_{s'}(t)$ is contributed by the loss of the attractiveness $B_s(t)$ at site $s$, i.e., part of $B_s(t)$ is transmitted to its neighbouring sites each time after it is burglarized.  This self--exciting phenomenon represents the strong tie of criminal activities to the attractiveness of their environments, as well as the feedback mechanism that local attractiveness is increased by criminal activities.  This mechanism is quite similar as the flowing of heat from region of high temperature to regions of low temperature.  Here we denote the near--repeat victimization effect by $\eta>0$.  Therefore the near--repeat victimization effect is strong if $\eta$ is large and it is weak if $\eta$ is small.   On the other hand, it is very likely that $\eta$ may vary from house to house.  In this paper, we assume that communities with different attractiveness values have different sensitivities to their local criminal activities, therefore $\eta$ at site $s$ takes the form $\eta_s(t)=\eta(A_s(t),t)$.  Moreover, as the transition probability depends on dynamic attractiveness $\eta_s$ of the current house (departure point) or $\eta_{s'}$ of the target house (arrival point), we divide our discussions into the following two cases.

\subsubsection{Near-repeat victimization effect $\eta$ dependent on attractiveness of departure point}
If the near--repeat victimization effect at $s$ depends on attractiveness of the departure point $s$, then the transition probability of house attractiveness takes the form
\begin{equation}\label{21}
P(s\rightarrow s')=\frac{\epsilon\eta_{s}(t)}{z},
\end{equation}
where $\epsilon$ is a positive constant that measures the maximum value of near--repeat victimization effect, $z$ is the number of neighbouring sites of $s$ ($z=4$ in 2D) and the notation $s\rightarrow s'$ denotes the shifting of dynamic attractiveness from $s$ to $s'$ due to the near--repeat victimization effect.

We assume that burglars occur at site $s$ during $(t,t+\delta t)$ following a standard Poisson process with probability $P_s(t)=1-e^{-A_s(t)\delta t}$, where $A_s(t)\delta t$ is the average number of burglars during $(t,t+\delta t)$.  Denote the total population and expected population of criminal agents at $s$ by $n_s(t)$ and $E_s(t)$ respectively.  Since the attractiveness decays to its baseline value at a rate $\omega$ if no criminal activity occurs afterwards, the attractiveness at $s$ satisfies the difference equation
\begin{equation}\label{22}
\begin{split}
B_s(t+\delta t)&=\Big(B_s(t)\big(1-\epsilon\eta_s(t)\big)+\sum\limits_{s'\sim s}P(s'\rightarrow s)B_{s'}(t)\Big)(1-\omega\delta t)+\theta E_s(t)\\
&=\Big(B_s(t)(1-\epsilon\eta_s(t))+\sum\limits_{s'\sim s}\frac{\epsilon\eta_{s'}(t)}{z}B_{s'}(t)\Big)(1-\omega\delta t)+\theta n_s(t)P_s(t)\\
&=\Big(B_s(t)+\frac{l^2\epsilon}{z}\Delta_d(\eta_s(t)B_s(t))\Big)(1-\omega\delta t)+\theta n_s(t)P_s(t),
\end{split}
\end{equation}
where the notation $s'\sim s$ indicates all the neighboring sites of $s$, $\theta$ is the increase of attractiveness due to one burglary event and $\Delta_d$ is the discrete spatial Laplacian
\begin{equation}\label{23}
\Delta_d B_s(t)=\left(\sum\limits_{s'\sim s}B_{s'}(t)-zB_s(t)\right)\Bigg/l^2.
\end{equation}
To derive the continuum PDE, we subtract $B_s(t)$ from (\ref{22}) and then divide it by $\delta t$.  Denote the criminal population density by $\rho_s(t)=\frac{n_s(t)}{l^2}$.  After sending both $\delta t$ and $l$ to zero and applying the limits as in \cite{SDP}
\[\lim_{\delta t\rightarrow 0^+}\frac{l^2}{\delta t}=D>0, ~\lim_{\delta t \rightarrow 0^+} \theta \delta t=\kappa>0,\]
we collect the following equation of the dynamic attractiveness
\begin{equation}\label{24}
\frac{\partial B}{\partial t}=\frac{\epsilon D}{z}\Delta(\eta B)-\omega B+\kappa D\rho A.
\end{equation}
where $\Delta=\sum_{i=1}^N \frac{\partial^2 }{\partial x_i^2}$ is the Laplacian in $\mathbb R^N$.  See \cite{SDP} for justifications on the limits.

\subsubsection{Near-repeat victimization effect $\eta$ dependent on attractiveness of arrival point}
If the near--repeat victimization effect $\eta$ at $s$ depends on its arrival-point $s'$, we can write the transition probability as
\begin{equation}\label{25}
P(s\rightarrow s')=\frac{\epsilon\eta_{s'}(t)}{z},
\end{equation}
where $\epsilon$ is defined to the same as for (\ref{21}).  Then the discrete equation of attractiveness leads us to
\begin{equation}\label{26}
\begin{split}
B_s(t+\delta t)&=\Big(B_s(t)\big(1-\sum _{s'\sim s}P(s\rightarrow s')\big)+\sum\limits_{s'\sim s}P(s'\rightarrow s)B_{s'}(t)\Big)(1-\omega\delta t)+\theta E_s(t)\\
&=\Big(B_s(t)\big(1-\sum\limits_{s'\sim s}\frac{\epsilon\eta_{s'}(t)}{z}\big)+\sum\limits_{s'\sim s}\frac{\epsilon\eta_s(t)}{z}B_{s'}(t)\Big)(1-\omega\delta t)+\theta n_s(t)P_s(t)\\
&=\Big(B_s(t)+\frac{\epsilon l^2}{z}\big(\eta_s(t)\Delta_d B_s(t)-B_s(t)\Delta_d\eta_s(t)\big)\Big)(1-\omega\delta t)+\theta n_s(t)P_s(t),
\end{split}
\end{equation}
where again we have used the notation of discrete Laplacian (\ref{23}).  By the same microscopic--macroscopic approach that leads to (\ref{24}), we obtain the following PDE
\begin{equation}\label{27}
\frac{\partial B}{\partial t}=\frac{\epsilon D}{z}\big(\eta \Delta B-B \Delta \eta \big)-\omega B+\kappa D\rho A.
\end{equation}
We want to point out that (\ref{27}) can be written into the following divergence form,
\[\frac{\partial B}{\partial t}=\frac{\epsilon D}{z}\nabla\cdot\left(\eta^2~\nabla\frac{B}{\eta}\right)-\omega B+\kappa D\rho A.\]
Moreover, (\ref{24}) and (\ref{27}) are equivalent if $\eta(A)$ is a constant.

\subsection{\textbf{Derivation of the burglars equation}}
It is assumed in \cite{SDP} that burglars must leave site $s$ for home after committing a crime at this site and they are removed from this location after time $t+\delta t$.  In this paper, we consider situations slightly general than \cite{SDP} and assume that criminal agents will take one of the three dispersal strategies at the next time step: (i) leave site $s$ with or without their hunting and then are removed from the system; (ii) stay at the current site; (iii) move to one of the neighbouring sites.  

To manifest the dispersal strategies above, we divide criminal agents at site $s$ into two groups: those who have committed burglaries during time $(t-\delta t,t]$ and those have not.  Each group of agents will take one of the three options at the next time step: stay at $s$ for further hunting, or move to a neighboring site of $s$, or go back home.  Define
\begin{equation}\label{28}
\begin{split}
&\lambda_1(\delta t): \text{probability that the burglar go home after burgling at $s$};\\
&\lambda_2(\delta t): \text{probability that the burglar go home without burgling at $s$}.
\end{split}
\end{equation}
It is very likely that the criminal agent who has burglarized at $s$ is still motivated for more looting goods at this site after a very short time period, therefore we assume that $\lambda_1(\delta t)\rightarrow \lambda_0 \in (0,1)$ as $\delta t\rightarrow0$; on the other hand, the criminal agent who has not burglarized are less likely to return home with nothing collected and to model this we assume that $\lambda_2(\delta t) \rightarrow 0$ as $\delta t\rightarrow0$.  For technical reasons, we assume that $\lambda_2(\delta t)=o(\delta t)$ as $\delta t\rightarrow0$ in our coming analysis.

If a criminal agent decides to move to one of the neighbouring sites on the grid, the dispersal is treated as random walk in \cite{SDP}, attracted by each of the neighbouring grids with dispersal probability
\[q_{s\rightarrow s'}(t)=\frac{A_{s'(t)}}{\sum\limits_{s'\sim s}A_{s'}(t)},\]
where $s'$ denotes any neighbouring site of $s$.  Our extension of this dispersal strategy is based on the following considerations.  First of all, if criminal agents choose to commit another burglar at the current site $s$ after burglarizing and if the current site is attractive enough, they would prefer staying for extra hunting than probing the neighbouring sites.  That being said, $q_{s\rightarrow s'}(t)$ needs to incorporate the information of the attractiveness at site $s$.

Second of all, it is assumed in \cite{SDP} that a criminal agent responds to the attractiveness $A_s(t)$ linearly, however this is based on the assumption that the agent has perfect information of the attractive $A_{s'}(t)$ of each neighbouring site, which might not be realistic.  See the definition of transition probability $q_{s\rightarrow s'}(t)$ above.  Therefore it is of our interest to put human response to physical stimulus into consideration and we assume that the criminal agent's perception of the attractiveness $A_s$ is not necessarily a linear function.   For the generality of our analysis, we denote $f(A_s)$ as the criminal agents' perception of the attractiveness of site $s$, where $f$ is a nonnegative function such that $f'(A)\geq0$ and $f(A)\leq A$ for all $A\geq0$.  One typical choice is a logarithmic type function, which resembles the Webner--Fecher's law on human perception to logarithmic of a physical stimulus.  Taking these discussions into account, we have that the dispersal probability of burglars moving from $s$ to $s'$ is
\begin{equation}\label{29}
q_{s\rightarrow s'}(t)=\frac{f(A_{s'},n_{s'})}{\sum\limits_{s''\in N(s)\cup\{s\}}f(A_{s''},n_{s''})},
\end{equation}
and the probability of burglars staying at $s$ is
\begin{equation}\label{210}
q_{s\rightarrow s}(t)=\frac{f(A_{s},n_{s})}{\sum\limits_{s''\in N(s)\cup\{s\}}f(A_{s''},n_{s''})},
\end{equation}
where $N(s)$ denotes the neighbouring sites of $s=(i,j)$.

We also like to point out that, in practice it is also realistic to assume that the criminal agents move to the neighbouring site $s'$ of the largest perceived attractiveness $f(A_{s'})$ with probability 1, especially when the difference between attractiveness at $s'$ and that of the rest site is ostensible.  For example, it is very likely that criminal agents would \emph{surely} move to the house that appears more attractive to them (e.g., with fancy cars, nice decoration, swimming pool, etc.) than the rest houses that appear less attractive, without having difficulty in choosing between the neighboring sites (unless the neighbouring sites are almost the same).  In this case, a realistic dispersal probability would be
\begin{equation*}
q_{s\rightarrow s'}(t)=
\left\{
\begin{array}{ll}
1,&\text{ if $f(A_{s'})=\max_{s^{''}\sim s} f(A_{s^{''}})$},\\
0,&\text{ otherwise}.\\
\end{array}
\right.
\end{equation*}
For the sake of our analysis we shall assume (\ref{29})--(\ref{210}) and derive the continuum models (\ref{12}) and (\ref{13}).

Criminal agents at site $s$ and time $t+\delta t$ consist of four types: the agents from site $s$ who burglarized at time $t$ but stayed there for the next time step, the agents from site $s$ who did not burglarize at time $t$ and stayed, the agents from neighbouring sites $s'$ who burglarized at time $t$, and those from neighbouring sites $s'$ who did not burglarized at time $t$.  Following the arguments in \cite{SDP} in the modeling of repeat victimization effect, we assume that the agents are created at a constant rate $\Gamma$ during time period of length $\delta t$.  Therefore the total population of criminal agents at site $s$ satisfies the following difference equation
\begin{equation}
\begin{split}
n_s(t+\delta t)=&\sum\limits_{s'\sim s}\frac{f(A_s,n_s)n_{s'}(1-P_{s'})(1-\lambda_2)}{\sum\limits_{s''\in N(s')\cup\{s'\}}f(A_{s''},n_{s''})}+\frac{f(A_s,n_s)n_{s}(1-P_{s})(1-\lambda_2)}{\sum\limits_{s''\in N(s)\cup\{s\}}f(A_{s''},n_{s''})}\\
+&\sum\limits_{s'\sim s}\frac{f(A_s,n_s)n_{s'}P_{s'}(1-\lambda_1)}{\sum\limits_{s''\in N(s')\cup\{s'\}}f(A_{s''},n_{s''})}+\frac{f(A_s,n_s)n_{s}P_{s}(1-\lambda_1)}{\sum\limits_{s''\in N(s)\cup\{s\}}f(A_{s''},n_{s''})}+\Gamma\delta t
\end{split}
\end{equation}
which can be simplified as
\begin{equation}\label{212}
\begin{split}
n_s(t+\delta t)=&\sum\limits_{s'\sim s}\frac{f(A_s,n_s)\big(n_{s'}P_{s'}(1-\lambda_1)+n_{s'}(1-P_{s'})(1-\lambda_2)\big)}{\sum\limits_{s''\in N(s')\cup\{s'\}}f(A_{s''},n_{s''})}\\
&+\frac{f(A_s,n_s)\big(n_{s}P_{s}(1-\lambda_1)+n_{s}(1-P_{s})(1-\lambda_2)\big)}{\sum\limits_{s''\in N(s)\cup\{s\}}f(A_{s''},n_{s''})}+\Gamma\delta t\\
=&\sum\limits_{s'\in N(s)\cup\{s\}}\frac{f(A_s,n_s)\big(n_{s'}P_{s'}(1-\lambda_1)+n_{s'}(1-P_{s'})(1-\lambda_2)\big)}{\sum\limits_{s''\in N(s')\cup\{s'\}}f(A_{s''},n_{s''})}+\Gamma\delta t.
\end{split}
\end{equation}
Furthermore, in terms of the discrete spatial Laplacian (\ref{23}), we can further simplify (\ref{212}) as
\begin{equation}\label{213}
\begin{split}
n_s(t+\delta t)=&f\Big(l^2\Delta_d\frac{nP(1-\lambda_1)+n(1-P)(1-\lambda_2)}{l^2\Delta_d f+(z+1)f}\\
&+z\frac{nP(1-\lambda_1)+n(1-P)(1-\lambda_2)}{l^2\Delta_d f+(z+1)f}\Big)\\
&+f\frac{nP(1-\lambda_1)+n(1-P)(1-\lambda_2)}{l^2\Delta_d f+(z+1)f}+\Gamma\delta t\\
=&f l^2\Delta_d\frac{nP(1-\lambda_1)+n(1-P)(1-\lambda_2)}{l^2\Delta_d f+(z+1)f}\\
&+(z+1)f \frac{nP(1-\lambda_1)+n(1-P)(1-\lambda_2)}{l^2\Delta_d f+(z+1)f}+\Gamma\delta t\\
\end{split}
\end{equation}
Denoting $\gamma=\Gamma/\delta t$ and $\frac{l^2}{\delta t}\rightarrow D$ as $\delta t,l\rightarrow 0$, we arrive at the continuum equation of criminal population density
\begin{equation}\label{214}
\frac{\partial\rho}{\partial t}=\frac{D}{z+1}\nabla\cdot(\nabla\rho-2\rho \nabla \log f(A))-\lambda_0\rho A+\gamma.
\end{equation}
We can easily observe in this continuum equation that the criminal agents direct their movements along the gradient of house attractiveness.

\subsubsection{Reaction--advection--diffusion systems with heterogeneous diffusion rate}
According to the analysis above, we arrive at the following two general systems
\begin{equation}\label{215}
\left\{
\begin{array}{ll}
\frac{\partial B}{\partial t}=\frac{\epsilon D}{z}\Delta(\eta B)-\omega B+\kappa D\rho A,\\
\frac{\partial\rho}{\partial t}=\frac{D}{z+1}\nabla\cdot(\nabla\rho-2\rho \nabla \log f)-\lambda_0\rho A+\gamma
\end{array}
\right.
\end{equation}
and
\begin{equation}\label{216}
\left\{
\begin{array}{ll}
\frac{\partial B}{\partial t}=\frac{\epsilon D}{z}\nabla\cdot\left(\eta^2~\nabla\frac{B}{\eta}\right)-\omega B+\kappa D\rho A,\\
\frac{\partial\rho}{\partial t}=\frac{D}{z+1}\nabla\cdot(\nabla\rho-2\rho \nabla \log f)-\lambda_0\rho A+\gamma.
\end{array}
\right.
\end{equation}
Due to transformations
\begin{equation*}
\tilde  A=A/\omega,~\tilde\rho=\frac{\kappa D}{\omega}\rho,~\tilde {\textbf{x}}=\sqrt{\frac{\omega(z+1)}{D}}\textbf{x},~\tilde t=\omega t.
\end{equation*}
and $\frac{z+1}{z}\eta(\omega\tilde A)=\tilde{\eta}(\tilde A)$ and $f(\omega\tilde A)=\tilde{f}(\tilde A)$,
systems (\ref{215}) and (\ref{216}) become (\ref{12}) and (\ref{13}) respectively, where the tildes are dropped there without causing any confusion.  It is the goal of our paper to investigate the formation of patterns in (\ref{215}) and (\ref{216}) through models (\ref{12}) and (\ref{13}).  In particular, we want to study the qualitative behaviors of stable steady states to these systems that model the crime data clustering phenomenon.  Without losing the generality of our analysis, we impose homogeneous Neumann boundary conditions for both systems.

\section{Linear stability analysis of homogeneous steady state}\label{section3}
We are interested in the formation of positive nonconstant steady states of (\ref{12}) and (\ref{13}) subject to (\ref{14}) with interesting patterns.  Our starting point is the stability analysis of the homogeneous steady state
\[(\bar A,\bar \rho)=\Big(A^0+\bar B,\frac{\bar B}{A^0+\bar B}\Big).\]

Linearizing (\ref{12}) around $(\bar A,\bar \rho)$, we have from simple calculations that stability of the homogeneous steady state is determined by the eigenvalues of the following matrix
\begin{equation}\label{31}
\mathcal H_k=
\left(
\begin{array}{cc}
-\epsilon\left(\eta(\bar A)+\eta\prime(\bar A)\bar B\right)\sigma_k+\bar\rho-1& \bar A  \\
\frac{2\bar\rho f'(\bar A)}{f(\bar A)}\sigma_k-\lambda_0\bar\rho& -\sigma_k-\lambda_0\bar A \\
\end{array}
\right),
\end{equation}
where $\sigma=\sigma_k>0$, $k=1,2,\cdots$ are the $k$--th eigenvalues of $-\Delta$ on $\Omega$ under the Neumann boundary conditions.  It is well known that the Neumann Laplacian has a discrete spectrum of infinitely many non-negative eigenvalues which form a strictly increasing sequence $0=\sigma_0<\sigma_1<\sigma_2<...<\sigma_k<...\rightarrow \infty$.  In the sequel, we assume that $\sigma_k$ is a simple eigenvalue and the eigenfunctions $\{\Phi_k\}_{k=0}^\infty$ form a complete orthonormal basis of $L^2(\Omega)$ with $\Phi_0\equiv \frac{1}{\vert \Omega\vert}$ and $\int_\Omega \Phi_k^2 dx=1$ for each $k\in \mathbb R^+$.  We have the instability result in the following proposition.
\begin{proposition}\label{proposition31}
Let $\sigma_k$ be the $k$--th Neumann eigenvalue of $-\Delta$.  The constant solution $(\bar A,\bar\rho)$ of (\ref{12}) is unstable if and only if
\begin{equation}\label{32}
\epsilon<\max_{k\in\mathbb{N^+}}\frac{\big(\frac{2\bar Bf'(\bar A)}{f(\bar A)}+\bar\rho-1\big)\sigma_k-\lambda_0\bar A}{(\eta(\bar A)+\eta'(\bar A)\bar B)(\sigma_k+\lambda_0\bar A)\sigma_k}.
\end{equation}
\end{proposition}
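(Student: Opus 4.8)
The plan is to reduce the question of instability of $(\bar A,\bar\rho)$ to a sign condition on the eigenvalues of the matrix $\mathcal H_k$ in (\ref{31}), mode by mode. Since the eigenfunctions $\{\Phi_k\}$ form a complete orthonormal basis of $L^2(\Omega)$ and the linearized operator acts diagonally on each eigenspace, the homogeneous steady state is linearly unstable precisely when at least one $\mathcal H_k$, $k\in\mathbb N^+$, has an eigenvalue with positive real part; conversely it is (linearly) stable when every $\mathcal H_k$ has spectrum in the open left half-plane. So the whole statement is equivalent to: there exists $k\in\mathbb N^+$ with $\mathcal H_k$ possessing an eigenvalue of positive real part. (Note the index starts at $k=1$: the $k=0$ mode $\Phi_0$ is constant and corresponds to $\sigma_0=0$, which only reflects the conservation/average structure and should be excluded from the instability criterion.)

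First I would recall the standard $2\times2$ criterion: for a real matrix, both eigenvalues have negative real part if and only if the trace is negative and the determinant is positive; consequently, at least one eigenvalue has positive real part if and only if $\det\mathcal H_k<0$ \emph{or} $\operatorname{tr}\mathcal H_k>0$. The plan is to show that, under the running assumptions, the decisive condition is $\det\mathcal H_k<0$, i.e. that the trace stays negative so instability is driven entirely by the determinant changing sign. To see the trace is negative, I would write
\[
\operatorname{tr}\mathcal H_k=-\epsilon\bigl(\eta(\bar A)+\eta'(\bar A)\bar B\bigr)\sigma_k+\bar\rho-1-\sigma_k-\lambda_0\bar A,
\]
and observe that $\bar\rho=\frac{\bar B}{A^0+\bar B}\in(0,1)$ gives $\bar\rho-1<0$, while the remaining terms are manifestly negative because $\sigma_k>0$, $\lambda_0,\bar A>0$, and the hypotheses $\eta>0$, $\eta'>0$ force the coefficient $\eta(\bar A)+\eta'(\bar A)\bar B>0$; hence $\operatorname{tr}\mathcal H_k<0$ for every $k$ and every $\epsilon>0$. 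This pins instability to the determinant.

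Next I would compute the determinant explicitly from (\ref{31}):
\[
\det\mathcal H_k=\Bigl(-\epsilon(\eta(\bar A)+\eta'(\bar A)\bar B)\sigma_k+\bar\rho-1\Bigr)\bigl(-\sigma_k-\lambda_0\bar A\bigr)-\bar A\Bigl(\tfrac{2\bar\rho f'(\bar A)}{f(\bar A)}\sigma_k-\lambda_0\bar\rho\Bigr).
\]
The instability condition $\det\mathcal H_k<0$ is linear in $\epsilon$, and the coefficient of $\epsilon$ is $(\eta(\bar A)+\eta'(\bar A)\bar B)\sigma_k(\sigma_k+\lambda_0\bar A)$, which is \emph{positive} by the sign facts already established. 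I would therefore collect the $\epsilon$-terms on one side and divide by this positive quantity (so the inequality direction is preserved), using $\bar A\bar\rho=\bar B$ (from $\bar\rho=\bar B/\bar A$) to simplify the linear-in-$\sigma_k$ numerator into exactly
\[
\Bigl(\tfrac{2\bar B f'(\bar A)}{f(\bar A)}+\bar\rho-1\Bigr)\sigma_k-\lambda_0\bar A.
\]
This yields the per-mode instability threshold $\epsilon<\Theta_k$ with $\Theta_k$ equal to the $k$-th term in the right-hand side of (\ref{32}). Since the state is unstable as soon as \emph{one} mode is unstable, the sharp condition is $\epsilon<\max_{k\in\mathbb N^+}\Theta_k$, which is precisely (\ref{32}).

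The main obstacle, and the only genuinely delicate point, is justifying that the trace is unconditionally negative so that the sign of the determinant alone governs instability; this rests entirely on the structural hypotheses $\eta(\bar A)>0$, $\eta'(\bar A)>0$ together with $\bar\rho<1$, and it is what allows the clean equivalence rather than a messier two-sided (Hopf-type) criterion. A secondary care point is the algebraic bookkeeping: one must divide by the $\epsilon$-coefficient only after confirming its positivity (guaranteed by $\eta+\eta'\bar B>0$ and $\sigma_k>0$), and one must use the identities $\bar A=A^0+\bar B$ and $\bar\rho=\bar B/\bar A$ to recognize the numerator in the stated form. Everything else is routine linear algebra applied eigenmode-by-eigenmode.
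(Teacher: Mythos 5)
Your proposal is correct and follows essentially the same route as the paper: decompose mode by mode, observe that $\operatorname{tr}\mathcal H_k<0$ for every $k$ and every $\epsilon>0$ because $\bar\rho=\bar B/(A^0+\bar B)<1$ and $\eta(\bar A)+\eta'(\bar A)\bar B>0$, so that instability is equivalent to $\det\mathcal H_k<0$ for some $k\in\mathbb N^+$, and then solve this linear-in-$\epsilon$ inequality (using $\bar A\bar\rho=\bar B$ and $(1-\bar\rho)\bar A=A^0$) to obtain the threshold in (\ref{32}). The paper phrases the trace condition via the characteristic polynomial $g(\xi)=\xi^2+\mathrm{Tr}\,\xi+\mathrm{Det}$ with $\mathrm{Tr}=-\operatorname{tr}\mathcal H_k>0$, but the argument is the same.
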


\begin{proof}
According to the principle of exchange of stability, see Theorem 5.2 in \cite{D} e.g., $(\bar A,\bar \rho)$ is stable if both eigenvalues of each $\mathcal H_k$ have negative real parts and it is unstable if $\mathcal H_{k}$ has an eigenvalue with positive real part for some $k \in \mathbb{N}^+$.  The characteristic polynomial of (\ref{31}) takes the form $g(\xi)=\xi^2+\text{Tr}\xi+\text{Det}$, where \[\text{Tr}=\epsilon\left(\eta(\bar A)+\eta\prime(\bar A)\bar B\right)\sigma_k+1-\bar\rho+\sigma_k+\lambda_0\bar A,\]
and
\[\text{Det}=\left(\epsilon\left(\eta(\bar A)+\eta\prime(\bar A)\bar B\right)\sigma_k+1-\bar\rho\right)\left(\sigma_k+\lambda_0\bar A\right)-\bar A\Big(\frac{2\bar\rho f'(\bar A)}{f(\bar A)}\sigma_k-\lambda_0\bar\rho\Big).\]
$\text{Tr}>0$ since $\bar \rho =\frac{\bar B}{A^0+\bar B}<1$, therefore $g(t)$ has one positive root if and only if $g(0)=\text{Det} <0$.  Then (\ref{32}) follows from straightforward calculations and this finishes the proof of Proposition \ref{proposition31}.
\end{proof}
By the same analysis we can prove the instability of the homogeneous steady state with respect to (\ref{13}) as follows.
\begin{corollary}\label{corollary31}
Suppose that $\eta(\bar A)>\eta'(\bar A)\bar B$.  The constant solution $(\bar A,\bar\rho)$ of (\ref{13}) is unstable if and only if
\begin{equation}\label{33}
\epsilon<\max_{k\in\mathbb{N^+}}\frac{(\frac{2\bar Bf'(\bar A)}{f(\bar A)}+\bar\rho-1)\sigma_k-\lambda_0\bar A}{(\eta(\bar A)-\eta'(\bar A)\bar B)(\sigma_k+\lambda_0\bar A)\sigma_k}.
\end{equation}
\end{corollary}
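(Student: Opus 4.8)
The plan is to follow the proof of Proposition \ref{proposition31} almost verbatim, since system (\ref{13}) differs from (\ref{12}) only in the diffusion term of the $A$--equation, while the $\rho$--equation and the kinetic terms $-A+A^0+\rho A$ are identical. Consequently only the $(1,1)$ entry of the stability matrix needs to be recomputed. First I would rewrite the divergence--form operator of (\ref{13}) by carrying out the inner gradient $\nabla\frac{A-A^0}{\eta(A)}=\frac{\eta(A)-(A-A^0)\eta'(A)}{\eta^2(A)}\nabla A$ and cancelling the factor $\eta^2(A)$, which gives
\begin{equation*}
\epsilon\nabla\cdot\left(\eta^2(A)\,\nabla\frac{A-A^0}{\eta(A)}\right)=\epsilon\nabla\cdot\Big(\big(\eta(A)-(A-A^0)\eta'(A)\big)\nabla A\Big).
\end{equation*}
Linearizing about the constant state with $A=\bar A+a$ and using $\nabla\bar A=0$, the derivative of the coefficient contributes nothing, so this term reduces to $\epsilon\big(\eta(\bar A)-\eta'(\bar A)\bar B\big)\Delta a$, where I have used $\bar A-A^0=\bar B$. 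Projecting onto the $k$--th Neumann eigenmode $\Phi_k$ with $-\Delta\Phi_k=\sigma_k\Phi_k$ then replaces the $(1,1)$ entry $-\epsilon(\eta(\bar A)+\eta'(\bar A)\bar B)\sigma_k+\bar\rho-1$ of $\mathcal H_k$ in (\ref{31}) by $-\epsilon(\eta(\bar A)-\eta'(\bar A)\bar B)\sigma_k+\bar\rho-1$, every other entry being unchanged. Thus the sole effect of passing from (\ref{12}) to (\ref{13}) is the sign flip $\eta'(\bar A)\bar B\mapsto-\eta'(\bar A)\bar B$ in the diffusion coefficient.

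With the modified matrix in hand, I would invoke the principle of exchange of stability exactly as before: writing the characteristic polynomial as $g(\xi)=\xi^2+\text{Tr}\,\xi+\text{Det}$, instability is equivalent to $g$ having a root with positive real part. The standing hypothesis $\eta(\bar A)>\eta'(\bar A)\bar B$ enters precisely here, guaranteeing that the modified coefficient remains positive so that $\text{Tr}=\epsilon(\eta(\bar A)-\eta'(\bar A)\bar B)\sigma_k+1-\bar\rho+\sigma_k+\lambda_0\bar A>0$ (recall $\bar\rho<1$). Given $\text{Tr}>0$, a root with positive real part exists if and only if $\text{Det}=g(0)<0$. Solving $\text{Det}<0$ for $\epsilon$ and dividing through by the positive quantity $(\eta(\bar A)-\eta'(\bar A)\bar B)(\sigma_k+\lambda_0\bar A)\sigma_k$ — whose positivity again relies on the hypothesis and fixes the direction of the inequality — then yields (\ref{33}), after the same simplifications $\bar A\bar\rho=\bar B$ and $\bar B+(1-\bar\rho)\bar A=\bar A$ used in Proposition \ref{proposition31}. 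Taking the maximum over $k$ gives the stated instability threshold.

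The only genuinely new computation is the linearization of the divergence--form diffusion term, and the single point requiring care is tracking the sign of the $\eta'(\bar A)\bar B$ contribution: this is simultaneously the source of the minus sign in (\ref{33}) and the reason the assumption $\eta(\bar A)>\eta'(\bar A)\bar B$ is imposed, as it is needed both to keep $\text{Tr}>0$ and to preserve the inequality direction when dividing out the diffusion coefficient. Everything downstream of the matrix is identical to the argument already given, so I would simply indicate these two substitutions and refer back to the proof of Proposition \ref{proposition31} for the remaining algebra.
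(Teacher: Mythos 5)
Your proposal is correct and follows exactly the route the paper intends: the paper offers no separate proof of Corollary \ref{corollary31}, stating only that it follows ``by the same analysis'' as Proposition \ref{proposition31}, and your computation supplies the one genuinely new step --- linearizing $\epsilon\nabla\cdot\bigl(\eta^2(A)\,\nabla\frac{A-A^0}{\eta(A)}\bigr)$ to obtain the coefficient $\eta(\bar A)-\eta'(\bar A)\bar B$ in the $(1,1)$ entry --- together with the correct observation that the hypothesis $\eta(\bar A)>\eta'(\bar A)\bar B$ is what keeps $\mathrm{Tr}>0$ and preserves the inequality direction when dividing. The remaining algebra, including the simplifications $\bar A\bar\rho=\bar B$ and $\bar B+(1-\bar\rho)\bar A=\bar A$, is identical to the proof of Proposition \ref{proposition31}, as you say.
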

Our stability analysis above suggests that small intrinsic diffusion rate $\epsilon$ destroys the stability of the homogeneous steady state $(\bar A,\bar \rho)$ to (\ref{12}) and (\ref{13}).  Here $\epsilon$ interprets the strength of intrinsic near--repeat victimization effect, therefore when each site is sensitive to burglars the neighbouring sites and the near--repeat victimization effect is strong, clustering of data is not expected in the community.  It is assumed that $\eta'(A)>0$ for all $A>0$, i.e., neighbourhood with a larger house attractiveness has stronger near--repeat victimization effect.  Comparing (\ref{32}) and (\ref{33}), we see that when $\epsilon$ is between the maximum values defined in (\ref{32}) and (\ref{33}), $(\bar A,\bar \rho)$ loses its stability in the arrival--dependent model (\ref{13}), while it is still stable in the departure--dependent model (\ref{12}).  More detailed are included in Section \ref{section6} to discuss the differences between the departure--dependent and arrival--dependent models.

\section{Existence of nonconstant positive steady states}\label{section4}
In this section, we study the existence of nonconstant positive steady states of (\ref{12}) and (\ref{13}) under (\ref{14}), i.e., nonconstant positive solutions to the following quasi--linear elliptic systems
\begin{equation}\label{41}
\left\{
\begin{array}{ll}
\epsilon\Delta(\eta(A)(A-A^0))-A+A^0+\rho A=0,&x\in \Omega\\
\nabla\cdot(\nabla\rho-2\rho \nabla\log f(A))-\lambda_0\rho A+\lambda_0 \bar B=0,&x\in \Omega\\
\frac{\partial A}{\partial\mathbf{n}}=\frac{\partial \rho}{\partial\mathbf{n}}=0,&x\in\partial\Omega,
\end{array}
\right.
\end{equation}
and
\begin{equation}\label{42}
\left\{
\begin{array}{ll}
\epsilon\nabla\cdot\left(\eta^2(A)~\nabla\frac{A-A^0}{\eta(A)}\right)-A+A^0+\rho A=0, &x\in \Omega,\\
\nabla\cdot(\nabla\rho-2\rho\nabla \log f(A))-\lambda_0\rho A+\lambda_0\bar B=0, &x\in  \Omega,\\
\frac{\partial A}{\partial\mathbf{n}}=\frac{\partial \rho}{\partial\mathbf{n}}=0,&x\in\partial\Omega.
\end{array}
\right.
\end{equation}

We shall perform rigorous bifurcation analysis due to Crandall--Rabinowitz \cite{CR} to establish nonconstant positive solutions for (\ref{41}), while the same analysis can be carried out for (\ref{42}).  Taking $\epsilon$ as the bifurcation parameter, we introduce the operator
\begin{equation}\label{43}
\mathcal{F}=\left(
\begin{array}{ll}
\epsilon\Delta(\eta(A)(A-A^0))-A+A^0+\rho A \\
\nabla\cdot(\nabla\rho-2\rho \nabla \log f(A))-\lambda_0\rho A+\lambda_0 \bar B\\
\end{array}
\right)
\end{equation}
from $\mathcal{\mathcal{X}}\times\mathcal{\mathcal{X}}\times \mathbb R^+$ to $\mathcal{Y}\times\mathcal{Y}$, where $\mathcal{X}$ is Sobolev space $\mathcal{X}=\{w\in W^{2,p}(\Omega)\big{|}\frac{\partial w}{\partial\mathbf{n}}=0,x\in\partial\Omega\}$ and $\mathcal{Y}=L^p(\Omega)$ for some $p>N$.  We want to point out that here the boundary condition makes sense since $W^{2,p}(\Omega)\hookrightarrow C^{1+\alpha}(\bar \Omega)$ for some $\alpha>0$ if $p>N$.  (\ref{41}) is equivalent to $\mathcal{F}(A,\rho,\epsilon)=0$ for $(A,\rho,\epsilon)\in\mathcal{X}\times\mathcal{X}\times \mathbb R^+$.  It is easy to see that $\mathcal{F}$ is a continuously differentiable mapping from $\mathcal{X}\times\mathcal{X}\times \mathbb R^+$ to $\mathcal{Y}\times\mathcal{Y}$ and $\mathcal{F}(\bar A,\bar\rho,\epsilon)=0$ for any $\epsilon\in \mathbb R^+$; moreover, for any fixed $(\hat A,\hat\rho)\in\mathcal{X}\times\mathcal{X}$, the Fr\'echet derivative of $\mathcal{F}$ is given by
\begin{equation}\label{44}
D_{(A,\rho)}\mathcal{F}(\hat A,\hat\rho,\epsilon)(A,\rho)=\left(
\begin{array}{cc}
\vspace{0.2in}
\epsilon\Delta((\eta(\hat A)+\eta'(\hat A)(\hat A-A^0))A)+(\hat\rho-1)A+\hat A\rho \\
\nabla\cdot\big(\nabla\rho-\frac{2\hat\rho f'(\hat A)}{f(\hat A)}\nabla A+\frac{2\hat\rho f'(\hat A)\nabla f(\hat A)}{f^2(\hat A)}A-\frac{2\hat\rho\nabla f'(\hat A)}{f(\hat A)}A\\
-\frac{2 \rho}{f(\hat A)}\nabla f(\hat A)\big)-\lambda_0\hat A\rho-\lambda_0\hat\rho A
\end{array}
\right);
\end{equation}
furthermore, by rewriting (\ref{44}) as
\[D_{(A,\rho)}\mathcal{F}(\hat A,\hat\rho,\epsilon)(A,\rho)=I_1\left(
\begin{array}{ll}
\Delta A\\
\Delta\rho
\end{array}
\right)+I_2\left(
\begin{array}{ll}
\nabla A\\
\nabla\rho
\end{array}
\right)+I_3\left(
\begin{array}{ll}
A\\
\rho
\end{array}
\right),\]
with
\[I_1=\left(
\begin{array}{cc}
\epsilon\left(\eta(\hat A)+\eta\prime(\hat A)(\hat A-A^0)\right)& ~~~~0 ~~~ \\
-\frac{2\hat\rho f'(\hat A)}{f(\hat A)}& ~~~~1 ~~~\\
\end{array}
\right),\]
\[I_2=\left(
\begin{array}{cc}
2\epsilon\nabla(\eta(\hat A)+\eta\prime(\hat A)(\hat A-A^0))&~~~ 0 ~~~ \\
\frac{2\hat\rho f'(\hat A)\nabla f(\hat A)}{f^2(\hat A)}-\frac{2\hat\rho\nabla f'(\hat A)}{f(\hat A)}-\nabla\frac{2\hat\rho f'(\hat A)}{f(\hat A)}& ~~~\frac{-2\nabla f(\hat A)}{f(\hat A)} ~~~\\
\end{array}
\right),\]
\[I_3=\left(
\begin{array}{cc}
\epsilon\Delta(\eta(\hat A)+\eta\prime(\hat A)(\hat A-A^0))+\hat\rho-1& ~~~\hat A  \\
\nabla\cdot(\frac{2\hat\rho f'(\hat A)\nabla f(\hat A)}{f^2(\hat A)}-\frac{2\hat\rho\nabla f'(\hat A)}{f(\hat A)})-\lambda_0\hat\rho& ~~~-2\nabla\cdot\frac{\nabla f(\hat A)}{f(\hat A)}-\lambda_0\hat A \\
\end{array}
\right),\]
we see that (\ref{44}) is a linear and compact operator according to standard elliptic regularity and Sobolev embeddings.  On the other hand, matrix $I_1$, defining the principal part of $D_{(A,\rho)}\mathcal{F}(\hat A,\hat\rho,\epsilon)$, has two positive eigenvalues, therefore $D_{(A,\rho)}\mathcal{F}(\hat A,\hat\rho,\epsilon)$ is a Fredholm operator with 0 index by Corollary 2.11 or Remark 3.4 of theorem 3.3 in Shi and Wang \cite{SDP}.

The necessary condition for bifurcation at $(\bar A,\bar\rho)$ is $\mathcal{N}\left(D_{(A,\rho)}\mathcal{F}(\bar A,\bar\rho,\epsilon)\right)\neq\{(0,0)\}$, where $\mathcal{N}$ denotes the null set and
\begin{equation}\label{45}
D_{(A,\rho)}\mathcal{F}(\bar A,\bar\rho,\epsilon)(A,\rho)=\left(
\begin{array}{ll}
\epsilon(\eta(\bar A)+\eta'(\bar A)\bar B)\Delta A+(\bar\rho-1)A+\bar A\rho \\
~~~\Delta\rho-\frac{2\bar\rho f^{\prime}(\bar A)}{f(\bar A)}\Delta A-\lambda_0\bar A\rho-\lambda_0\bar\rho A
\end{array}
\right).
\end{equation}
To show this condition, we choose some $(A,\rho)\in D_{(A,\rho)}\mathcal{F}(\bar A,\bar\rho,\epsilon)$ and substituting their eigen-expansions $A=\sum_{k=0}^{\infty} S_k \Phi_k(x),\rho=\sum_{k=0}^{\infty}  T_k \Phi_k(x)$ into (\ref{45}) to collect
\begin{equation}\label{46}
\left(
\begin{array}{cc}
-\epsilon(\eta(\bar A)+\eta\prime(\bar A)\bar B)\sigma_k+\bar\rho-1& \bar A  \\
\frac{2\bar\rho f'(\bar A)}{f(\bar A)}\sigma_k-\lambda_0\bar\rho& -\sigma_k-\lambda_0\bar A \\
\end{array}
\right)
\left(
\begin{array}{cc}
S_k\\
T_k
\end{array}
\right)=0,
\end{equation}
then for each $k \in \mathbb R^+$, (\ref{46}) has nontrivial solutions if and only if the coefficient matrix is singular, i.e.,
\begin{equation}\label{47}
\epsilon=\bar \epsilon_k=\frac{\big(\frac{2\bar Bf'(\bar A)}{f(\bar A)}+\bar\rho-1\big)\sigma_k-\lambda_0\bar A}{(\eta(\bar A)+\eta'(\bar A)\bar B)(\sigma_k+\lambda_0\bar A)\sigma_k},~k\in\mathbb{N}^+,
\end{equation}
and this is the necessary condition for bifurcations to occur at $(\bar A,\bar \rho)$.  If the eigenvalue $\sigma_k$ is simple, it follows that $\mathcal{N}(D_{(A,\rho)}\mathcal{F}(\bar{A},\bar{\rho},\bar \epsilon_{k}))$ is one--dimensional and its basis is spanned by $\left\{ (\bar{A}_{k}, \bar{\rho}_{k}) \right\}$ with
\begin{equation}\label{48}
\bar{A}_{k}=Q_k\Phi_k,~\bar{\rho}_{k}=\Phi_k,
\end{equation}
where $\Phi_k$ is the $k$--th Neumann Laplacian eigenfunction and
\begin{equation}\label{49}
Q_k=\frac{\sigma_k+\lambda_0\bar A}{\frac{2\bar\rho f'(\bar A)}{f(\bar A)}\sigma_k-\lambda_0\bar\rho}, k \in \mathbb{N}^+.
\end{equation}
The fact that $D_{(A,\rho)}\mathcal{F}(\bar A,\bar{\rho},\epsilon)$ is Fredholm with index 0 implies that dim $\mathcal N(D_{(A,\rho)}\mathcal{F}(\bar A,\bar{\rho},\epsilon))$ equals codim $\mathcal R(D_{(A,\rho)}\mathcal{F}(\bar A,\bar{\rho},\epsilon))=1$.

We now verify that local bifurcation does occur at $(\bar A,\bar \rho,\bar \epsilon_{k})$ in the following theorem, which establishes nonconstant positive solutions to (\ref{41}).
\begin{theorem}\label{theorem41}
Let $\sigma_k$ be the $k$--th Neumann Laplace eigenvalue such that $\sigma_k \neq \frac{\lambda_0   f(\bar A)}{2f'(\bar A)}$ and $\frac{\lambda_0\bar A+(1-\bar\rho)\sigma_k}{2\sigma_k\bar B} <\frac{f'(\bar A)}{f(\bar A)}$ for each $k\in \mathbb N^+$.  Moreover suppose that
\begin{equation}\label{410}
\Big(\frac{2\bar B f'(\bar A)}{f(\bar A)}+\bar\rho-1\Big)\sigma_k\sigma_j\neq \lambda_0\bar A(\sigma_k+\sigma_j+\lambda_0\bar A)
\end{equation}
for all $j\neq k\in \mathbb N^+$.  Then for each $k\in \mathbb N^+$, (\ref{41}) admits solutions around $(\bar A,\bar \rho,\bar \epsilon_k)$ that consist precisely of the continuously smooth curve $\Gamma_{k}(s)=\{(A_{k}(s,x), \rho_{k}(s,x), \epsilon_{k}(s))\}$, $s\in(-\delta,\delta)$, where
\begin{equation}\label{411}
\epsilon_{k}(0)=\bar \epsilon_{k}+o(s),~(A_{k}(s,x),\rho_{k}(s,x))=(\bar A,\bar \rho)+s(Q_{k},1)\Phi_k+o(s);
\end{equation}
moreover, $(A_{k}(s,x),\rho_{k}(s,x))-(\bar A,\bar \rho)-s(Q_{k},1)\Phi_k$ is in the closed complement of the null space $\mathcal{Z}$ of $D_{(A,\rho)}\mathcal{F}(\bar{A},\bar{\rho},\bar \epsilon_{k})$, which is explicitly given by
\begin{equation}\label{412}
\mathcal{Z}=\big\{(A,\rho)\in \mathcal{X} \times \mathcal{X}~ \big \vert \int_{\Omega} A \bar A_{k}+\rho\bar \rho_{k} dx=0\big\},
\end{equation}
where $(\bar A_{k},\bar \rho_{k})$ is defined in (\ref{48}).
\end{theorem}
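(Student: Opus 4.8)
The plan is to apply the local bifurcation theorem of Crandall and Rabinowitz \cite{CR} with $\epsilon$ as the bifurcation parameter along the trivial branch $\mathcal{F}(\bar A,\bar\rho,\epsilon)=0$. Two of its three hypotheses are already in hand from the discussion preceding the statement: $D_{(A,\rho)}\mathcal{F}(\bar A,\bar\rho,\bar\epsilon_k)$ is Fredholm of index zero, and its null space is generated by $(\bar A_k,\bar\rho_k)=(Q_k\Phi_k,\Phi_k)$. I would first note that the standing assumption $\sigma_k\neq\frac{\lambda_0 f(\bar A)}{2f'(\bar A)}$ keeps the denominator in (\ref{49}) from vanishing, so $Q_k$ and the kernel generator are well defined, and that the hypothesis $\frac{\lambda_0\bar A+(1-\bar\rho)\sigma_k}{2\sigma_k\bar B}<\frac{f'(\bar A)}{f(\bar A)}$ is exactly the condition $\bar\epsilon_k>0$ forcing the bifurcation to occur at an admissible positive parameter value. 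I would then certify that the kernel is genuinely one--dimensional: since $\det\mathcal{M}_j(\bar\epsilon_k)=0$ precisely when $\bar\epsilon_k=\bar\epsilon_j$, and since factoring $\bar\epsilon_k-\bar\epsilon_j$ isolates the factor $(\sigma_k-\sigma_j)$ times $\lambda_0\bar A(\sigma_k+\sigma_j+\lambda_0\bar A)-\big(\frac{2\bar B f'(\bar A)}{f(\bar A)}+\bar\rho-1\big)\sigma_k\sigma_j$, the nonresonance condition (\ref{410}) is exactly what guarantees $\bar\epsilon_j\neq\bar\epsilon_k$ for all $j\neq k$, so no off--mode contributes to $\mathcal{N}\big(D_{(A,\rho)}\mathcal{F}(\bar A,\bar\rho,\bar\epsilon_k)\big)$.

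The main technical step, and the only one requiring genuine computation, is the transversality condition
\[\frac{\partial}{\partial\epsilon}\Big[D_{(A,\rho)}\mathcal{F}(\bar A,\bar\rho,\epsilon)\Big]_{\epsilon=\bar\epsilon_k}(\bar A_k,\bar\rho_k)\notin\mathcal{R}\big(D_{(A,\rho)}\mathcal{F}(\bar A,\bar\rho,\bar\epsilon_k)\big).\]
Differentiating (\ref{45}) in $\epsilon$ annihilates every term except the diffusion term of the first component, and evaluating on $(\bar A_k,\bar\rho_k)=(Q_k\Phi_k,\Phi_k)$ with $\Delta\Phi_k=-\sigma_k\Phi_k$ yields the vector $\big(-(\eta(\bar A)+\eta'(\bar A)\bar B)\sigma_k Q_k\Phi_k,\,0\big)$, which is supported on the $k$--th Fourier mode alone. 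I would characterize the range of $\mathcal{L}:=D_{(A,\rho)}\mathcal{F}(\bar A,\bar\rho,\bar\epsilon_k)$ through the eigenexpansion: on the $j$--th mode $\mathcal{L}$ acts by the matrix $\mathcal{M}_j(\bar\epsilon_k)$ of (\ref{46}), invertible for $j\neq k$ and of rank one for $j=k$, so by elliptic regularity $(g_1,g_2)\in\mathcal{R}(\mathcal{L})$ if and only if the $k$--th mode of $(g_1,g_2)$ lies in the column space of $\mathcal{M}_k(\bar\epsilon_k)$. That column space is spanned by $(\bar A,\,-\sigma_k-\lambda_0\bar A)^{\top}$, whose second entry is strictly negative, so it cannot contain the nonzero horizontal vector $\big(-(\eta(\bar A)+\eta'(\bar A)\bar B)\sigma_k Q_k,\,0\big)^{\top}$; here $Q_k\neq0$ because its numerator $\sigma_k+\lambda_0\bar A$ is positive. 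This settles transversality, and I expect the bookkeeping in this range characterization to be the most delicate part of the argument.

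With all three hypotheses confirmed, Crandall--Rabinowitz delivers the smooth curve $\Gamma_k(s)$ of the asserted form (\ref{411}), the remainder lying in the complement $\mathcal{Z}$ of (\ref{412}), which is precisely the $L^2$--orthogonal of the kernel generator $(\bar A_k,\bar\rho_k)$ dictated by the index--zero Fredholm structure. Finally, positivity of the bifurcating solutions follows for $|s|$ small without further work: since $\bar A=A^0+\bar B>0$ and $\bar\rho=\bar B/(A^0+\bar B)\in(0,1)$ are strictly positive constants while the correction is $O(s)$ in $C^1(\bar\Omega)$ via $W^{2,p}\hookrightarrow C^{1+\alpha}(\bar\Omega)$, shrinking $\delta$ keeps $(A_k(s,x),\rho_k(s,x))$ strictly positive on $\bar\Omega$. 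The same scheme applies verbatim to (\ref{42}) with $\eta(\bar A)+\eta'(\bar A)\bar B$ replaced by $\eta(\bar A)-\eta'(\bar A)\bar B$.
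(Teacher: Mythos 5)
Your proposal is correct and follows essentially the same route as the paper: verify the Crandall--Rabinowitz hypotheses, with the only substantive computation being the transversality condition, handled via the eigenfunction expansion and the singular $2\times 2$ mode matrix at $\bar\epsilon_k$. Your version of the transversality step (showing the vector $\bigl(-(\eta(\bar A)+\eta'(\bar A)\bar B)\sigma_k Q_k,\,0\bigr)^{\top}$ cannot lie in the column space spanned by $(\bar A,\,-\sigma_k-\lambda_0\bar A)^{\top}$) is in fact stated more carefully than the paper's "the right-hand side is nonzero" shortcut, and your identification of the roles of the three hypotheses and the positivity remark are correct additions, but these are refinements rather than a different argument.
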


\begin{proof} To make use of the local bifurcation theory in \cite{CR}, we have verified all but the so-called transversality condition: $\frac{d}{d\epsilon}D_{(A,\rho)}\mathcal{F}(\bar A,\bar \rho,\epsilon)(\bar A_k,\bar \rho_k)\big|_{\epsilon=\bar \epsilon_k}\not\in \mathcal R\left(D_{(A,\rho)}\mathcal{F}(\bar A,\bar\rho,\bar \epsilon_k)\right)$, where
\[\frac{d}{d\epsilon}D_{(A,\rho)}\mathcal{F}(\bar A,\bar\rho,\epsilon)(\bar A_k,\bar \rho_k)\big|_{\epsilon=\bar \epsilon_k}=\left(
\begin{array}{cc}
(\eta(\bar A)+\eta'(\bar A)\bar B)\Delta \bar A_k\\
0
\end{array}
\right).\]
We argue by contradiction and assume that there exists $(\tilde A,\tilde\rho)\in \mathcal{X}\times \mathcal{X}$ that satisfies
\begin{equation*}
\left\{
\begin{array}{ll}
\epsilon_k(\eta(\bar A)+\eta'(\bar A)\bar B)\Delta\tilde A+(\bar\rho-1)\tilde A+\bar A\tilde\rho=(\eta(\bar A)+\eta'(\bar A)\bar B)\Delta \bar A_k, &x\in\Omega,\\
\Delta\tilde\rho-\frac{2\bar \rho f'(\bar A)}{f(\bar A)}\Delta\tilde A-\lambda_0\bar\rho\tilde A-\lambda_0\bar A\tilde\rho=0, &x\in\Omega,\\
\frac{\partial\tilde A}{\partial\mathbf{n}}=\frac{\partial\tilde \rho}{\partial\mathbf{n}}=0, &x\in\partial\Omega.
\end{array}
\right.
\end{equation*}
In light of the eigen--expansions $\tilde A=\sum_{k=0}^{\infty}\tilde S_k\Phi_k$ and $\tilde\rho=\sum_{k=0}^{\infty}\tilde T_k\Phi_k$, we collect from the system above that
\begin{equation}\label{413}
\left(
\begin{array}{cc}
-\bar \epsilon_k(\eta(\bar A)+\eta\prime(\bar A)\bar B)\sigma_k+\bar\rho-1& \bar A  \\
\frac{2\bar\rho f'(\bar A)}{f(\bar A)}\sigma_k-\lambda_0\bar\rho& -\sigma_k-\lambda_0\bar A \\
\end{array}
\right)
\left(
\begin{array}{cc}
\tilde S_k\\
\tilde T_k
\end{array}
\right)=
\left(
\begin{array}{cc}
-(\eta(\bar A)+\eta\prime(\bar A)\bar B)\sigma_kS_k\\
0
\end{array}
\right).
\end{equation}
$k=0$ is ruled out since $\sigma_0=0$.  For $k\in \mathbb N^+$, the coefficient matrix of (\ref{413}) is singular thanks to (\ref{47}), however this is impossible since the right hand side of (\ref{413}) is nonzero and we reach a contradiction.  This verifies the transversality condition and the remaining statements follow from Theorem 1.7 in \cite{CR}.
\end{proof}
\begin{remark}\label{remark41}
Define $\Gamma^{\pm}_{k}(s)=\{(A_{k}(s,x), \rho_{k}(s,x), \epsilon_{k}(s)), \pm s\in(0,\delta)\}$ and let $\mathcal C$ be any connected component of solution set to (\ref{41}) over $\mathcal X \times \mathcal X \times \mathbb R^+$.  Denote $\mathcal C^\pm$ as the connected component of $\mathcal C \backslash \Gamma^\mp_{k}(s)$ which contains $\Gamma^\pm_{k}(s)$.   According to Theorem 44 in \cite{SW}, each $\mathcal C^\pm$ satisfies one of the three alternatives: (i). it is not compact; (ii). it contains a point $(\bar A,\bar \rho,\bar \epsilon^*)$ with $\bar \epsilon^* \neq \bar \epsilon_k$, for any $k\in\mathbb N^+$; (iii).  it contains a point $(A+\tilde A,\rho+\tilde \rho,\epsilon)$ with $(\tilde A,\tilde \rho),\neq (0,0),\in \mathcal Z$.  By the same calculations that lead to the transversality condition, we can easily rule out case (iii).  However, it is a mathematical challenging problem to characterize and determine when case (i) or (ii) may happen in order to extend the local bifurcation branches to global.  Global bifurcation results are very important in studying the qualitative behaviors of nonconstant positive steady states $(A(x),\rho(x))$ to (\ref{41}) when $\epsilon$ is away from $\max_{k\in \mathbb N^+}\bar \epsilon_k$, for which $(\bar A_k(s,x), \bar \rho_k(s,x))$ is unstable.  See Theorem \ref{theorem51}.
\end{remark}

Similarly as above we can show the existence of nonconstant positive solutions to (\ref{42}).
\begin{theorem}\label{theorem42}Denote
\begin{equation}\label{414}
 \bar \epsilon_k=\frac{(\frac{2\bar Bf'(\bar A)}{f(\bar A)}+\bar\rho-1)\sigma_k-\lambda_0\bar A}{(\eta(\bar A)-\eta'(\bar A)\bar B)(\sigma_k+\lambda_0\bar A)\sigma_k}.
\end{equation}
Suppose that $\bar \epsilon_k>0$ and $\bar \epsilon_k\neq \bar \epsilon_j$ for any $k\neq j\in \mathbb N^+$ .  Then (\ref{42}) has nonconstant positive solutions $(A_k(s,x),\rho_k(s,x))$ on curve $\Gamma_k(s)$ around $(\bar A,\bar \rho, \bar \epsilon_k)$; moreover $\Gamma_k(s)$ satisfies all the properties in Theorem \ref{theorem41}.
\end{theorem}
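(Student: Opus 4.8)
The plan is to repeat the Crandall--Rabinowitz argument of Theorem \ref{theorem41} essentially line by line, the only genuinely new ingredient being the linearization of the quasilinear flux in the first equation of (\ref{42}). First I would introduce the operator
\[\mathcal G(A,\rho,\epsilon)=\left(
\begin{array}{ll}
\epsilon\nabla\cdot\left(\eta^2(A)~\nabla\frac{A-A^0}{\eta(A)}\right)-A+A^0+\rho A \\
\nabla\cdot(\nabla\rho-2\rho \nabla \log f(A))-\lambda_0\rho A+\lambda_0 \bar B
\end{array}
\right)\]
acting from $\mathcal X\times \mathcal X\times\mathbb R^+$ to $\mathcal Y\times\mathcal Y$, so that (\ref{42}) reads $\mathcal G(A,\rho,\epsilon)=0$ and $\mathcal G(\bar A,\bar\rho,\epsilon)\equiv 0$. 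Since the second component is identical to that of $\mathcal F$ in (\ref{43}), every structural fact about $\mathcal G$ reduces to understanding its first component.

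The key simplification I would exploit is that the flux collapses to a single quasilinear diffusion: a direct computation gives the identity $\eta^2(A)\nabla\frac{A-A^0}{\eta(A)}=\big(\eta(A)-(A-A^0)\eta'(A)\big)\nabla A$, so the first equation of (\ref{42}) is just $\epsilon\nabla\cdot\big(h(A)\nabla A\big)-A+A^0+\rho A$ with $h(A)=\eta(A)-(A-A^0)\eta'(A)$. Linearizing at a general base point $\hat A$ produces principal coefficient $h(\hat A)$ together with strictly lower--order terms; at the constant state the vanishing of $\nabla\eta(\bar A)$ kills all first--order contributions, leaving principal part $\epsilon\,h(\bar A)\Delta=\epsilon\big(\eta(\bar A)-\eta'(\bar A)\bar B\big)\Delta$, which is exactly (\ref{45}) with the sign of $\eta'(\bar A)\bar B$ flipped. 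The standing assumption $\eta(A)\ge\eta'(A)A$ forces $h(\hat A)=\eta(\hat A)-\hat A\eta'(\hat A)+A^0\eta'(\hat A)>0$ for every admissible $\hat A$, so the principal matrix $I_1$ again has two positive eigenvalues and $D_{(A,\rho)}\mathcal G(\hat A,\hat\rho,\epsilon)$ is Fredholm of index zero, precisely as for $\mathcal F$.

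From here everything transcribes verbatim. Replacing $\eta(\bar A)+\eta'(\bar A)\bar B$ by $\eta(\bar A)-\eta'(\bar A)\bar B$ in the singularity condition for the matrix in (\ref{46}) yields exactly the bifurcation value $\bar\epsilon_k$ of (\ref{414}); the hypothesis $\bar\epsilon_k>0$ places it in the admissible range $\mathbb R^+$, while $\bar\epsilon_k\neq\bar\epsilon_j$ keeps it simple. The null space at $\bar\epsilon_k$ stays one--dimensional and spanned by $(Q_k\Phi_k,\Phi_k)$, with $Q_k$ unchanged from (\ref{49}) since it is determined by the (unaltered) second equation. Finally the transversality condition follows from the same contradiction as in (\ref{413}): the image of the null vector under $\frac{d}{d\epsilon}D_{(A,\rho)}\mathcal G$ has first component $\big(\eta(\bar A)-\eta'(\bar A)\bar B\big)\Delta\bar A_k\neq 0$ and vanishing second component, which cannot lie in the range of the matrix that is singular at $\bar\epsilon_k$. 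Theorem 1.7 of \cite{CR} then delivers the curve $\Gamma_k(s)$ with all the asserted properties. I expect the only real obstacle to be this flux linearization: one must verify carefully both that the leading coefficient is precisely $\eta(\bar A)-\eta'(\bar A)\bar B$ and that all remaining terms are of lower differential order, so that the compactness and index-zero arguments of Theorem \ref{theorem41} carry over without change.
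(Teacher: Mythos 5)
Your proposal is correct and follows exactly the route the paper intends: the paper gives no separate proof of Theorem \ref{theorem42}, stating only that it follows ``similarly as above,'' i.e., by rerunning the Crandall--Rabinowitz argument of Theorem \ref{theorem41} with $\eta(\bar A)+\eta'(\bar A)\bar B$ replaced by $\eta(\bar A)-\eta'(\bar A)\bar B$. Your explicit computation of the flux identity $\eta^2(A)\nabla\frac{A-A^0}{\eta(A)}=\bigl(\eta(A)-(A-A^0)\eta'(A)\bigr)\nabla A$ and the positivity of this coefficient under the standing assumption $\eta(A)\ge \eta'(A)A$ correctly supplies the one detail the paper leaves implicit, so the Fredholm, null--space, and transversality steps carry over as you describe.
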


\section{Stability analysis of the bifurcating solutions around $(\bar A, \bar\rho,\bar \epsilon_k)$}\label{section5}
We now proceed to investigate the stability or instability of the spatially inhomogeneous solution $(A_k(s,x),\rho_k(s,x))$ established in Theorem \ref{theorem41}. To this end, we apply the results from Crandall--Rabinowitz \cite{CR2} on the linearized stability of bifurcating solutions with an analysis of the spectrum of system (\ref{41}). Stability here refers to the stability of the inhomogeneous patterns taken as an equilibrium of (\ref{41}).  According to Theorem 3.2 of \cite{CR2}, we can write the following asymptotic expansions of the bifurcating solution to (\ref{41})
\begin{equation}\label{51}
\left\{
\begin{array}{ll}
A_k(s,x)=\bar A+sQ_k\Phi_k+s^2\psi_1+s^3\psi_2+o(s^3),\\
\rho_k(s,x)=\bar\rho+s\Phi_k+s^2\varphi_1+s^3\varphi_2+o(s^3),\\
\epsilon_k(s)=\bar\epsilon_k+K_1s+K_2s^2+o(s^2),
\end{array}
\right.
\end{equation}
where $(\psi_i,\varphi_i)\in\mathcal{Z}$ for $i=1,2$ and $o(s^3)$ terms in $A_k(s,x)$ and $\rho_k(s,x)$ are taken in $W^{2,p}$--topology.  First of all, we evaluate $K_1$ and $K_2$ in the following proposition.  Here and in the sequel $K_1$ and $K_2$ depend on $k$ and we have skipped this index for simplicity of notation.
\begin{proposition}\label{proposition51}
Suppose that all conditions in Theorem \ref{theorem41} hold.  Then $K_1$ given in (\ref{51}) satisfies
\begin{equation}\label{52}
\begin{split}
&\big(\eta(\bar A)+\eta'(\bar A)\bar B\big)Q_k\sigma_kK_1\\
=&\Bigg(\frac{(\bar\rho-1-(\eta(\bar A)+\eta'(\bar A)\bar B)\bar\epsilon_k\sigma_k-\bar AQ_k)(\lambda_0Q_k-(\frac{f'(\bar A)}{f(\bar A)}+\bar\rho (\frac{f''(\bar A)}{f(\bar A)}-\frac{f'(\bar A)^2}{f(\bar A)^2})Q_k)Q_k\sigma_k)}{\frac{2\bar\rho f'(\bar A)}{f(\bar A)}\sigma_k-\lambda_0\bar\rho+(\sigma_k+\lambda_0\bar A)Q_k}\\
&+(Q_k-(\eta'(\bar A)+\frac{1}{2}\eta''(\bar A)\bar B)Q_k^2\bar\epsilon_k\sigma_k)\Bigg)\int_{\Omega}\Phi_k^3dx.
\end{split}
\end{equation}
\end{proposition}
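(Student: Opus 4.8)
The plan is to insert the asymptotic expansions (\ref{51}) into the identity $\mathcal{F}(A_k(s,x),\rho_k(s,x),\epsilon_k(s))=0$ and to match like powers of $s$. Because $\mathcal{F}(\bar A,\bar\rho,\epsilon)\equiv 0$ for every $\epsilon$, the $O(1)$ coefficient vanishes identically, and the $O(s)$ coefficient collapses to $D_{(A,\rho)}\mathcal{F}(\bar A,\bar\rho,\bar\epsilon_k)(Q_k\Phi_k,\Phi_k)=0$, which is exactly the kernel relation (\ref{46})--(\ref{49}). Hence all the information determining $K_1$ is carried by the $O(s^2)$ coefficient, and the whole proof reduces to writing that coefficient down and imposing a solvability (Fredholm) condition on it.

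To produce the $O(s^2)$ equation I would first Taylor-expand every nonlinearity in $\mathcal{F}$ to second order about $(\bar A,\bar\rho)$. Writing $G(A)=\eta(A)(A-A^0)$, so that $G'(\bar A)=\eta(\bar A)+\eta'(\bar A)\bar B$ and $G''(\bar A)=2\eta'(\bar A)+\eta''(\bar A)\bar B$, and $h(A)=f'(A)/f(A)$, the quadratic pieces arise from $\epsilon\Delta G(A)$, from the reaction $\rho A$, from $-\lambda_0\rho A$, and from the advection $-2\nabla\cdot(\rho\nabla\log f(A))=-2\nabla\cdot(\rho\, h(A)\nabla A)$. Feeding the leading mode $(Q_k\Phi_k,\Phi_k)$ into these terms, and using $\Phi_k\nabla\Phi_k=\tfrac12\nabla(\Phi_k^2)$, generates source terms proportional only to $\Phi_k^2$ and to $\Delta(\Phi_k^2)$. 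Collected together, the $O(s^2)$ equation takes the form $D_{(A,\rho)}\mathcal{F}(\bar A,\bar\rho,\bar\epsilon_k)(\psi_1,\varphi_1)=\mathcal{G}$, where the inhomogeneity $\mathcal{G}$ is the sum of the quadratic source and the transversality contribution $K_1\,\tfrac{d}{d\epsilon}D_{(A,\rho)}\mathcal{F}(\bar A,\bar\rho,\epsilon)(\bar A_k,\bar\rho_k)\big|_{\bar\epsilon_k}=K_1\big(-G'(\bar A)Q_k\sigma_k\Phi_k,\,0\big)^{\top}$ already computed in the proof of Theorem \ref{theorem41}.

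Next I would impose the Fredholm solvability condition. Since $D_{(A,\rho)}\mathcal{F}(\bar A,\bar\rho,\bar\epsilon_k)$ is Fredholm of index $0$ with one-dimensional kernel, its range is the $L^2$-orthogonal complement of the one-dimensional kernel of the formal adjoint; using self-adjointness of the Neumann Laplacian, that adjoint kernel is spanned by $(P_k\Phi_k,\Phi_k)$ with $P_k=(\sigma_k+\lambda_0\bar A)/\bar A$, read off from the transpose of the singular coefficient matrix $M_k$ in (\ref{46}). Requiring $\mathcal{G}\perp(P_k\Phi_k,\Phi_k)$ in $L^2(\Omega)\times L^2(\Omega)$ produces a single scalar equation for $K_1$. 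Every integral reduces to a multiple of $\int_\Omega\Phi_k^3\,dx$: the quadratic $\Phi_k^2$ terms pair with $\Phi_k$ to give $\int_\Omega\Phi_k^3\,dx$ directly, and integrating $\Delta(\Phi_k^2)$ against $\Phi_k$ twice by parts gives $\int_\Omega\Delta(\Phi_k^2)\Phi_k\,dx=-\sigma_k\int_\Omega\Phi_k^3\,dx$, while the transversality term supplies the coefficient $G'(\bar A)Q_k\sigma_k$ of $K_1$ on the left of (\ref{52}).

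The final and most delicate step is the algebraic reduction to the stated form. The raw solvability identity carries an overall factor $P_k$ together with a mixture of $\lambda_0$-, $h(\bar A)$- and $h'(\bar A)$-terms, where $h'(\bar A)=\tfrac{f''(\bar A)}{f(\bar A)}-\tfrac{f'(\bar A)^2}{f(\bar A)^2}$. To reach (\ref{52}) one uses the two kernel relations encoded in $M_k(Q_k,1)^{\top}=0$, namely $(\bar\rho-1-G'(\bar A)\bar\epsilon_k\sigma_k)Q_k=-\bar A$ and $\big(\tfrac{2\bar\rho f'(\bar A)}{f(\bar A)}\sigma_k-\lambda_0\bar\rho\big)Q_k=\sigma_k+\lambda_0\bar A$, to rewrite $\bar\rho-1-G'(\bar A)\bar\epsilon_k\sigma_k-\bar A Q_k=-\bar A(1+Q_k^2)/Q_k$ and to recognize the denominator as $\tfrac{2\bar\rho f'(\bar A)}{f(\bar A)}\sigma_k-\lambda_0\bar\rho+(\sigma_k+\lambda_0\bar A)Q_k=\big(\tfrac{2\bar\rho f'(\bar A)}{f(\bar A)}\sigma_k-\lambda_0\bar\rho\big)(1+Q_k^2)$; these two facts cancel the $P_k$ and convert the $\lambda_0/h$-combination into the quotient displayed in (\ref{52}), leaving the residual $Q_k-(\eta'(\bar A)+\tfrac12\eta''(\bar A)\bar B)Q_k^2\bar\epsilon_k\sigma_k$ from the diffusion expansion. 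I expect this bookkeeping — in particular tracking every gradient product generated by the general logarithmic sensitivity $\nabla\log f(A)$ and matching it against the factored denominator — to be the main obstacle, whereas the hypotheses of Theorem \ref{theorem41} (so that $Q_k$ and the denominator are well defined and nonzero) make the division permissible throughout.
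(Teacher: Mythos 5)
Your proposal is correct and would yield exactly (\ref{52}); the difference from the paper lies only in how the order-$s^2$ solvability condition is closed. Both arguments substitute the expansions (\ref{51}) into (\ref{41}), Taylor-expand the nonlinearities, and collect the $s^2$-coefficient, whose source terms reduce (as you say) to multiples of $\Phi_k^2$ and $\Delta(\Phi_k^2)$, hence of $\int_\Omega\Phi_k^3\,dx$ after pairing with $\Phi_k$; the paper implements the same reduction via the identity $\int_\Omega\Phi_k|\nabla\Phi_k|^2\,dx=\tfrac{\sigma_k}{2}\int_\Omega\Phi_k^3\,dx$. Where you diverge is in eliminating $(\psi_1,\varphi_1)$: you pair the full $O(s^2)$ system against the left null vector $(P_k\Phi_k,\Phi_k)$, $P_k=(\sigma_k+\lambda_0\bar A)/\bar A$, of the singular mode matrix, which kills the $(\psi_1,\varphi_1)$-contribution in one stroke and never invokes the normalization $(\psi_1,\varphi_1)\in\mathcal Z$. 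The paper instead projects each component equation separately onto $\Phi_k$, obtaining (\ref{513}) and (\ref{515}) in the two unknowns $\int_\Omega\psi_1\Phi_k\,dx$ and $\int_\Omega\varphi_1\Phi_k\,dx$, closes the system with the $\mathcal Z$-orthogonality relation (\ref{516}), solves explicitly in (\ref{517})--(\ref{518}), and back-substitutes; this is why its formula is phrased with the denominator $\tfrac{2\bar\rho f'(\bar A)}{f(\bar A)}\sigma_k-\lambda_0\bar\rho+(\sigma_k+\lambda_0\bar A)Q_k$ rather than your factored form $\bigl(\tfrac{2\bar\rho f'(\bar A)}{f(\bar A)}\sigma_k-\lambda_0\bar\rho\bigr)(1+Q_k^2)$ --- the two agree by the kernel relations you quote, and indeed $\bigl([M_k]_{11}-[M_k]_{12}Q_k\bigr)/\bigl([M_k]_{21}-[M_k]_{22}Q_k\bigr)=-\bar A/(\sigma_k+\lambda_0\bar A)=-1/P_k$, so the two scalar relations for $K_1$ coincide. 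Your adjoint route is marginally cleaner for $K_1$ alone; the paper's route has the side benefit of producing the explicit projections (\ref{517})--(\ref{518}), in the same spirit as the longer bookkeeping it later needs for $K_2$.
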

$K_1$ is determined by $\int_{\Omega}\Phi_k^3dx$ and the system parameters.  In particular, if $\Omega$ is a finite 1D interval or multi-D rectangle, $\int_{\Omega}\Phi_k^3dx=0$ hence $K_1=0$.  In this case, we need to evaluate $K_2$ which is given below.
\begin{proposition}\label{proposition52}
Suppose all conditions in Theorem \ref{theorem41} hold.  If $K_1=0$ in (\ref{51}), then $K_2$ satisfies
\begin{equation}\label{53}
\begin{split}
&(\eta(\bar A)+\eta'(\bar A)\bar B)Q_k\sigma_k K_2\\
=&\Big(\bar\rho-1-(\eta(\bar A)+\eta'(\bar A)\bar B)\bar\epsilon_k\sigma_k\Big)\int_{\Omega}\psi_2\Phi_kdx+\bar A\int_{\Omega}\varphi_2\Phi_kdx\\
&+\Big(1-(2\eta'(\bar A)+\eta''(\bar A)\bar B)Q_k\bar\epsilon_k\sigma_k\Big)\int_{\Omega}\Phi_k^2\psi_1dx\\
&+Q_k\int_{\Omega}\Phi_k^2\varphi_1dx-\Big(\frac{\eta''(\bar A)}{2}+\frac{\eta'''(\bar A)}{6}\bar B\Big)Q_k^3\bar\epsilon_k\sigma_k\int_{\Omega}\Phi_k^4dx.
\end{split}
\end{equation}
\end{proposition}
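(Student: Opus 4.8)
The plan is to substitute the asymptotic expansions (\ref{51}) — with $K_1=0$ — directly into the first equation of the steady-state system (\ref{41}), namely $\epsilon\Delta(\eta(A)(A-A^0))-A+A^0+\rho A=0$, collect the coefficient of $s^3$, and isolate $K_2$ by projecting this identity onto $\Phi_k$, i.e. multiplying by $\Phi_k$ and integrating over $\Omega$. This is the same projection mechanism that produced $K_1$ in Proposition \ref{proposition51}, carried one order higher.

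First I would set $G(A):=\eta(A)(A-A^0)$ and record its derivatives at $\bar A$ using $\bar A-A^0=\bar B$: $G'(\bar A)=\eta(\bar A)+\eta'(\bar A)\bar B$, $G''(\bar A)=2\eta'(\bar A)+\eta''(\bar A)\bar B$, and $G'''(\bar A)=3\eta''(\bar A)+\eta'''(\bar A)\bar B$. Writing $A-\bar A=sQ_k\Phi_k+s^2\psi_1+s^3\psi_2+o(s^3)$ and Taylor expanding $G(A)$ to third order, the $s^1$ and $s^3$ coefficients of $G(A)$ are
\[
G_1=G'(\bar A)Q_k\Phi_k,\qquad G_3=G'(\bar A)\psi_2+G''(\bar A)Q_k\Phi_k\psi_1+\tfrac{1}{6}G'''(\bar A)Q_k^3\Phi_k^3.
\]
Since $\epsilon=\bar\epsilon_k+K_2s^2+o(s^2)$ — and here the hypothesis $K_1=0$ is essential, as it removes any $K_1$-cross terms at order $s^3$ — the $s^3$ coefficient of $\epsilon\Delta G(A)$ is exactly $\bar\epsilon_k\Delta G_3+K_2\Delta G_1$. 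Expanding the product $\rho A$ to third order contributes $\bar\rho\psi_2+\Phi_k\psi_1+Q_k\Phi_k\varphi_1+\bar A\varphi_2$, while $-A+A^0$ contributes $-\psi_2$.

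Next I would assemble the $s^3$ identity and integrate against $\Phi_k$, using $\Delta\Phi_k=-\sigma_k\Phi_k$ and $\int_\Omega\Phi_k^2\,dx=1$. The relation $\Delta G_1=-G'(\bar A)Q_k\sigma_k\Phi_k$ shows the $K_2$-term yields $-G'(\bar A)Q_k\sigma_k K_2$, which after rearrangement becomes the left-hand side $\big(\eta(\bar A)+\eta'(\bar A)\bar B\big)Q_k\sigma_k K_2$ of (\ref{53}). Two integrations by parts (legitimate under the Neumann condition) give $\int_\Omega\Delta G_3\,\Phi_k\,dx=-\sigma_k\int_\Omega G_3\Phi_k\,dx$, which unfolds into the integrals $\int_\Omega\psi_2\Phi_k$, $\int_\Omega\Phi_k^2\psi_1$, and $\int_\Omega\Phi_k^4$ with coefficients $-\bar\epsilon_k\sigma_k G'(\bar A)$, $-\bar\epsilon_k\sigma_k G''(\bar A)Q_k$, and $-\tfrac{1}{6}\bar\epsilon_k\sigma_k G'''(\bar A)Q_k^3$. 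Regrouping these with the contributions $(\bar\rho-1)\psi_2$, $\Phi_k\psi_1$, $Q_k\Phi_k\varphi_1$, $\bar A\varphi_2$ according to the integrals $\int_\Omega\psi_2\Phi_k$, $\int_\Omega\varphi_2\Phi_k$, $\int_\Omega\Phi_k^2\psi_1$, $\int_\Omega\Phi_k^2\varphi_1$, $\int_\Omega\Phi_k^4$, and substituting the expressions for $G',G'',G'''$ back in terms of $\eta$, produces precisely (\ref{53}).

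The work here is careful bookkeeping rather than a conceptual hurdle; the step most prone to error is the third-order Taylor expansion of $G(A)$, specifically pinning down the mixed term $G''(\bar A)Q_k\Phi_k\psi_1$ and the cubic term $\tfrac{1}{6}G'''(\bar A)Q_k^3\Phi_k^3$, and then correctly translating $G''(\bar A),G'''(\bar A)$ into $2\eta'(\bar A)+\eta''(\bar A)\bar B$ and $3\eta''(\bar A)+\eta'''(\bar A)\bar B$. I would also note that, unlike the classical Crandall--Rabinowitz solvability computation against the adjoint null vector, projecting only the first component onto $\Phi_k$ does not eliminate the third-order corrections $(\psi_2,\varphi_2)$; consequently (\ref{53}) is a relation satisfied by $K_2$ rather than a closed expression, with $\int_\Omega\psi_2\Phi_k$ and $\int_\Omega\varphi_2\Phi_k$ to be resolved separately (e.g.\ through the companion projection of the second equation of (\ref{41})).
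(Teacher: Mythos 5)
Your proposal is correct and follows essentially the same route as the paper: substitute the expansions (\ref{51}) into the first equation of (\ref{41}), collect the $s^{3}$ coefficient (which is exactly the paper's (\ref{519})), and project onto $\Phi_k$. The only cosmetic difference is that you transfer the Laplacian onto $\Phi_k$ via Green's identity, $\int_\Omega\Delta G_3\,\Phi_k\,dx=-\sigma_k\int_\Omega G_3\Phi_k\,dx$, which bypasses the identity $\int_{\Omega}\Phi_k^2\vert\nabla\Phi_k\vert^2dx=\frac{\sigma_k}{3}\int_{\Omega}\Phi_k^4dx$ that the paper uses after expanding $\Delta G_3$ term by term; your closing remark that (\ref{53}) is only a relation for $K_2$, with $\int_\Omega\psi_2\Phi_k\,dx$ and $\int_\Omega\varphi_2\Phi_k\,dx$ to be resolved from the second equation, matches what the paper does subsequently.
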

Similarly, to determine the stability of the bifurcating solutions to (\ref{42}), we can write their expansions as in (\ref{51}), then by the same calculations that lead to (\ref{52}) and (\ref{53}), we can find $K_1$ and $K_2$ there in the following results.
\begin{corollary}\label{corollary52}
Let (\ref{51}) be the asymptotic solutions of (\ref{42}).  Suppose that all conditions in Theorem \ref{theorem42} are satisfied.  Then we have that
\begin{equation}\label{54}
\begin{split}
&(\eta(\bar A)-\eta'(\bar A)\bar B)Q_k\sigma_k K_1\\
=&\Bigg(\frac{\big(\bar\rho-1-(\eta(\bar A)-\eta'(\bar A)\bar B)\bar\epsilon_k\sigma_k-\bar AQ_k\big)\big(\lambda_0Q_k-(\frac{f'(\bar A)}{f(\bar A)}+\bar\rho (\frac{f''(\bar A)}{f(\bar A)}-\frac{f'(\bar A)^2}{f(\bar A)^2})Q_k)Q_k\sigma_k\big)}{\frac{2\bar\rho f'(\bar A)}{f(\bar A)}\sigma_k-\lambda_0\bar\rho+(\sigma_k+\lambda_0\bar A)Q_k}\\
&+(Q_k+\frac{1}{2}\eta''(\bar A)\bar BQ_k^2\bar\epsilon_k\sigma_k)\Bigg{)}\int_{\Omega}\Phi_k^3dx;
\end{split}
\end{equation}
moreover, if $K_1=0$, $K_2$ satisfies
\begin{equation}\label{55}
\begin{split}
&(\eta(\bar A)-\eta'(\bar A)\bar B)Q_k\sigma_k K_2\\
=&\Big(\bar\rho-1-(\eta(\bar A)-\eta'(\bar A)\bar B)\bar\epsilon_k\sigma_k\Big)\int_{\Omega}\psi_2\Phi_kdx+\bar A\int_{\Omega}\varphi_2\Phi_kdx\\
&+\Big(1+\eta''(\bar A)\bar BQ_k\bar\epsilon_k\sigma_k\Big)\int_{\Omega}\Phi_k^2\psi_1dx+Q_k\int_{\Omega}\Phi_k^2\varphi_1dx\\
&+\frac{1}{6}\Big(\eta''(\bar A)+\eta'''(\bar A)\bar{B}\Big)Q_k^3\bar\epsilon_k\sigma_k\int_{\Omega}\Phi_k^4dx.
\end{split}
\end{equation}
\end{corollary}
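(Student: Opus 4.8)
The plan is to mirror, order by order, the computation behind Propositions \ref{proposition51} and \ref{proposition52}, exploiting the fact that the only difference between systems (\ref{41}) and (\ref{42}) sits in the attractiveness equation. The decisive first step is to recast the arrival--point diffusion in the same divergence form used for the departure model. A direct calculation gives
\[
\eta^2(A)\,\nabla\frac{A-A^0}{\eta(A)}=\big(\eta(A)-(A-A^0)\eta'(A)\big)\nabla A,
\]
so that, writing $D(A):=\eta(A)-(A-A^0)\eta'(A)$ and letting $H$ be the antiderivative $H(A)=2\int^A\eta-(A-A^0)\eta(A)$ of $D$, the diffusion term in (\ref{42}) becomes $\epsilon\Delta H(A)$. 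This is exactly the structure $\epsilon\Delta(\eta(A)(A-A^0))=\epsilon\Delta G(A)$ of (\ref{41}) with $G(A)=\eta(A)(A-A^0)$ replaced by $H(A)$. Since the $\rho$--equation of (\ref{42}) is identical to that of (\ref{41}), the entire perturbation scheme of Section \ref{section5} transfers verbatim once $G$ is replaced by $H$.

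Concretely, I would substitute the expansions (\ref{51}) together with $\epsilon_k(s)=\bar\epsilon_k+K_1s+K_2s^2+o(s^2)$ into the steady state of (\ref{42}), expand $H(A)=H(\bar A)+H'(\bar A)(A-\bar A)+\frac12H''(\bar A)(A-\bar A)^2+\frac16H'''(\bar A)(A-\bar A)^3+\cdots$, and collect powers of $s$. At $O(s)$ one recovers the singular matrix (\ref{46}) with $\bar\epsilon_k$ now given by (\ref{414}), confirming the kernel $(Q_k\Phi_k,\Phi_k)$. At $O(s^2)$ and $O(s^3)$ the systems take the form $D_{(A,\rho)}\mathcal F(\bar A,\bar\rho,\bar\epsilon_k)(\psi_i,\varphi_i)=(R_1^{(i)},R_2^{(i)})$, and one imposes the Fredholm solvability condition by pairing the right--hand side with the cokernel vector $(P_k\Phi_k,\Phi_k)$, $P_k=\frac{\sigma_k+\lambda_0\bar A}{\bar A}$, spanning $\mathcal N\big(D_{(A,\rho)}\mathcal F(\bar A,\bar\rho,\bar\epsilon_k)^{*}\big)$, together with the constraint $(\psi_i,\varphi_i)\in\mathcal Z$ from (\ref{412}). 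Because the second components $R_2^{(i)}$ are inherited unchanged from Propositions \ref{proposition51}--\ref{proposition52}, the only new input is the diffusion contribution, which reduces to substituting
\[
H'(\bar A)=\eta(\bar A)-\eta'(\bar A)\bar B,\quad H''(\bar A)=-\eta''(\bar A)\bar B,\quad H'''(\bar A)=-\big(\eta''(\bar A)+\eta'''(\bar A)\bar B\big)
\]
for $G'(\bar A)=\eta(\bar A)+\eta'(\bar A)\bar B$, $G''(\bar A)=2\eta'(\bar A)+\eta''(\bar A)\bar B$, $G'''(\bar A)=3\eta''(\bar A)+\eta'''(\bar A)\bar B$. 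Thus the coefficient $\frac12G''(\bar A)$ multiplying $-Q_k^2\bar\epsilon_k\sigma_k$ in (\ref{52}) becomes $\frac12H''(\bar A)=-\frac12\eta''(\bar A)\bar B$, turning that contribution into the term $Q_k+\frac12\eta''(\bar A)\bar B\,Q_k^2\bar\epsilon_k\sigma_k$ of (\ref{54}); likewise the coefficient $(\eta(\bar A)+\eta'(\bar A)\bar B)\bar\epsilon_k\sigma_k$ in the numerator becomes $(\eta(\bar A)-\eta'(\bar A)\bar B)\bar\epsilon_k\sigma_k$. When $K_1=0$, the same substitution in the $O(s^3)$ condition replaces $G''$ by $H''$ in the $\int_\Omega\Phi_k^2\psi_1$ coefficient and $\tfrac16G'''$ by $\tfrac16H'''$ in the $\int_\Omega\Phi_k^4$ coefficient, yielding precisely (\ref{55}).

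The routine but genuinely laborious part is the $O(s^3)$ identity for $K_2$: it requires the second--order profiles $(\psi_1,\varphi_1)$ and the expansion of the logarithmic sensitivity $\nabla\log f(A)$ to cubic order, so the main obstacle is the bookkeeping of the many quadratic and cubic cross terms generated by the chemotaxis flux. I expect to sidestep the worst of this by observing that every such term lives in the $\rho$--equation and is therefore literally identical to the one already handled in Proposition \ref{proposition52}; the only modification one must track through the algebra is the replacement of $\eta(\bar A)+\eta'(\bar A)\bar B$ by $\eta(\bar A)-\eta'(\bar A)\bar B$ and of the departure bifurcation value by (\ref{414}) wherever the diffusion coefficient $H'(\bar A)$ enters. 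Once these substitutions are made the two solvability conditions collapse to (\ref{54}) and (\ref{55}), completing the proof.
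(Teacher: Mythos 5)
Your proposal is correct and follows essentially the same route as the paper, which simply repeats the Section \ref{section5} expansion for system (\ref{42}); your reformulation of the two diffusion terms as $\epsilon\Delta G(A)$ and $\epsilon\Delta H(A)$ with $H'(\bar A)=\eta(\bar A)-\eta'(\bar A)\bar B$, $H''(\bar A)=-\eta''(\bar A)\bar B$ and $H'''(\bar A)=-\big(\eta''(\bar A)+\eta'''(\bar A)\bar B\big)$ is a clean and verifiably correct way to see that only these Taylor coefficients change, and substituting them into (\ref{52}) and (\ref{53}) reproduces (\ref{54}) and (\ref{55}) exactly, signs included. One small imprecision: since (\ref{55}) still contains $\int_\Omega\psi_2\Phi_k\,dx$ and $\int_\Omega\varphi_2\Phi_k\,dx$, the third-order identity is obtained by testing the $A$-equation with $\Phi_k$ alone (as in (\ref{519})), not by pairing with the left null vector $\big(\frac{\sigma_k+\lambda_0\bar A}{\bar A}\Phi_k,\Phi_k\big)$, which would instead eliminate those integrals and yield an equivalent but differently presented formula.
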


Bifurcation branch $\Gamma_k(s)$ is transcritical if $K_1\neq0$ and is pitchfork if $K_1=0$ and $K_2\neq0$.  Indeed, as we shall see in the coming theorem, if $K_1\neq0$ the sign of $K_1$ determines the stability of $(A_k(s,x),\rho_k(s,x))$, and if $K_1=0$ we need to determine the sign of $K_2$.  Now we present the stability of the bifurcating solution $(A_k(s,x),\rho_k(s,x))$ in the following theorem.
\begin{theorem}\label{theorem51}
 Suppose that all conditions in Theorem \ref{theorem41} hold.  Assume that $\bar \epsilon_{k_0}=\max_{k\in \mathbb N^+}\bar \epsilon_k$.  Then for all $k\neq k_0$, the steady state $(A_k(s,x),\rho_k(s,x))$ is always unstable for $s\in(-\delta,\delta)$.  If $K_1<0$, $(A_{k_0}(s,x),\rho_{k_0}(s,x))$ is asymptotically stable for $s\in(0,\delta)$ and is unstable for $s\in(-\delta,0)$; if $K_1>0$, $(A_{k_0}(s,x),\rho_{k_0}(s,x))$ is asymptotically stable for $s\in(-\delta,0)$ and is unstable for $s\in(0,\delta)$; moreover, if $K_1=0$, then $(A_{k_0}(s,x),\rho_{k_0}(s,x))$ is asymptotically stable for $s\in(-\delta,\delta)$ if $K_2<0$ and is unstable if $K_2>0$.
\end{theorem}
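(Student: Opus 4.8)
The plan is to invoke the linearized stability theory of Crandall and Rabinowitz \cite{CR2}, the same source that produced the expansions \eqref{51}, which compares the spectrum of the linearization along the branch $\Gamma_{k}(s)$ with that of the linearization at the constant state. The argument splits into two regimes. For $k\neq k_{0}$ I would show that an instability already present at $(\bar A,\bar\rho)$ is inherited by the nearby bifurcating solution; for $k=k_{0}$ I would invoke the principle of exchange of stability and reduce the question to the sign of a single small eigenvalue $\zeta(s)$ of $D_{(A,\rho)}\mathcal F(A_{k_0}(s,\cdot),\rho_{k_0}(s,\cdot),\epsilon_{k_0}(s))$ with $\zeta(0)=0$.

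I would first dispose of the case $k\neq k_{0}$. Since $\bar\epsilon_{k_{0}}=\max_{k}\bar\epsilon_{k}$ is attained only at $k_{0}$, we have $\bar\epsilon_{k}<\bar\epsilon_{k_{0}}$, so at $\epsilon=\bar\epsilon_{k}$ the mode-$k_{0}$ block $\mathcal H_{k_{0}}$ in \eqref{31} has $\mathrm{Det}<0$ and hence a positive eigenvalue; thus $(\bar A,\bar\rho)$ is unstable at $\epsilon=\bar\epsilon_{k}$ by Proposition \ref{proposition31}. Because $(A_{k}(s,\cdot),\rho_{k}(s,\cdot),\epsilon_{k}(s))\to(\bar A,\bar\rho,\bar\epsilon_{k})$ in $\mathcal X\times\mathcal X\times\mathbb{R}^{+}$ as $s\to 0$, and the linearized operator is sectorial with compact resolvent so that its spectrum varies continuously with the state, the isolated eigenvalue with positive real part persists for all small $s$. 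Hence $(A_{k}(s,\cdot),\rho_{k}(s,\cdot))$ is unstable for every $s\in(-\delta,\delta)$.

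For $k=k_{0}$ the key point is a \emph{spectral gap}. Because $\bar\epsilon_{k_{0}}$ is the maximal bifurcation value, at $\epsilon=\bar\epsilon_{k_{0}}$ every non-critical block $\mathcal H_{j}$, $j\neq k_{0}$, has both eigenvalues in the open left half-plane, so the only spectrum of $D_{(A,\rho)}\mathcal F(\bar A,\bar\rho,\bar\epsilon_{k_{0}})$ on the imaginary axis is the simple eigenvalue $0$ coming from mode $k_{0}$, isolated at positive distance from the rest. This gap persists under the small perturbation to $(A_{k_0}(s,\cdot),\rho_{k_0}(s,\cdot),\epsilon_{k_0}(s))$, so all non-critical eigenvalues remain strictly in the left half-plane and the stability of the bifurcating solution is decided solely by the sign of the perturbed critical eigenvalue $\zeta(s)$. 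To orient this eigenvalue I would record the crossing of the zero eigenvalue $\mu(\epsilon)$ of $D_{(A,\rho)}\mathcal F(\bar A,\bar\rho,\epsilon)$: Proposition \ref{proposition31} gives $\mu(\epsilon)>0$ for $\epsilon<\bar\epsilon_{k_{0}}$ and $\mu(\epsilon)<0$ for $\epsilon>\bar\epsilon_{k_{0}}$, whence $\mu'(\bar\epsilon_{k_{0}})<0$.

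It then remains to read off the sign of $\zeta(s)$. By Theorem 1.16 of \cite{CR2}, $\zeta(s)$ and $-s\,\epsilon_{k_{0}}'(s)\,\mu'(\bar\epsilon_{k_{0}})$ share the same sign for small $s\neq 0$ whenever $\zeta(s)\neq 0$. Since $-\mu'(\bar\epsilon_{k_{0}})>0$, the sign of $\zeta(s)$ equals that of $s\,\epsilon_{k_{0}}'(s)$, and the equilibrium is asymptotically stable precisely when $\zeta(s)<0$. Inserting $\epsilon_{k_{0}}(s)=\bar\epsilon_{k_{0}}+K_{1}s+K_{2}s^{2}+o(s^{2})$ from \eqref{51}, in the transcritical case $K_{1}\neq 0$ we get $s\,\epsilon_{k_{0}}'(s)=K_{1}s+o(s)$, so $\mathrm{sign}\,\zeta(s)=\mathrm{sign}(K_{1}s)$, which gives stability on $(0,\delta)$ when $K_{1}<0$ and on $(-\delta,0)$ when $K_{1}>0$; in the pitchfork case $K_{1}=0$ we get $s\,\epsilon_{k_{0}}'(s)=2K_{2}s^{2}+o(s^{2})$, so $\mathrm{sign}\,\zeta(s)=\mathrm{sign}(K_{2})$ and the branch is stable on $(-\delta,\delta)$ iff $K_{2}<0$. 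The main obstacle is the spectral-gap step: one must verify that the non-critical part of the spectrum stays uniformly in the left half-plane under the nonlinear perturbation, so that asymptotic stability genuinely reduces to the single eigenvalue $\zeta(s)$; the accompanying sign bookkeeping from \cite{CR2} and the exclusion of $\zeta(s)=0$ (automatic in the generic cases $K_{1}\neq0$ or $K_{2}\neq0$) are then routine.
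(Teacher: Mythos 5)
Your proposal is correct and follows essentially the same route as the paper: instability for $k\neq k_0$ by persistence of the positive eigenvalue that the $k_0$--mode already produces at $\epsilon=\bar\epsilon_k<\bar\epsilon_{k_0}$, and for $k=k_0$ the exchange--of--stability machinery of Crandall--Rabinowitz (Corollary 1.13 and Theorem 1.16 of \cite{CR2}) reducing everything to $\mathrm{sgn}\,\mu_{k_0}(s)=\mathrm{sgn}\big(s\,\epsilon_{k_0}'(s)\big)$ once $\dot\nu(\bar\epsilon_{k_0})<0$. The only (harmless) divergence is that you read off $\dot\nu(\bar\epsilon_{k_0})<0$ from the sign change of the critical eigenvalue across $\bar\epsilon_{k_0}$ -- where strictness should be noted to follow from $\partial_\epsilon\mathrm{Det}>0$ and $\mathrm{Tr}>0$ rather than from the sign change alone -- whereas the paper computes it directly by differentiating the eigenvalue problem in $\epsilon$ and testing against $\Phi_{k_0}$; your explicit spectral--gap discussion for the non-critical modes is in fact more careful than the paper's one-line remark.
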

Transcritical and pitchfork bifurcations are schematically presented in Figure \ref{fig0} to illustrate the stability results in Theorem \ref{theorem51}.
\begin{figure}[h!]
\centering
\minipage{.22\textwidth}\centering
  \includegraphics[width=1.3in]{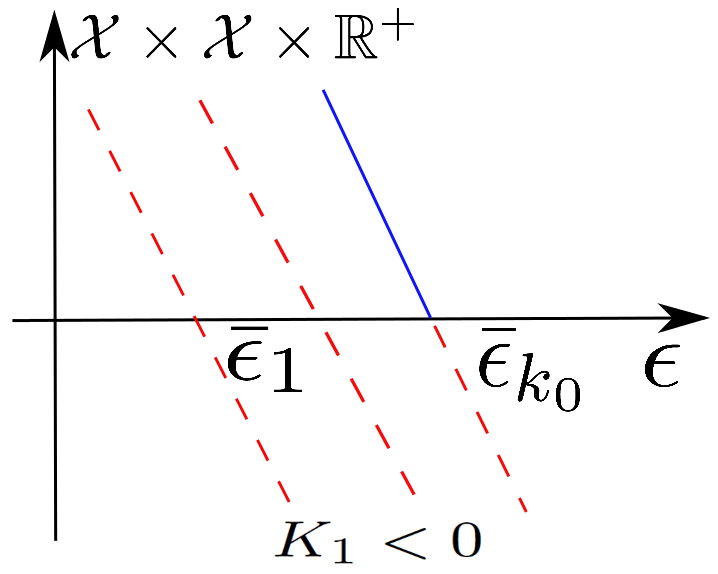}\caption*{Sub-transcritical}
\endminipage
\minipage{.22\textwidth}\centering
  \includegraphics[width=1.3in]{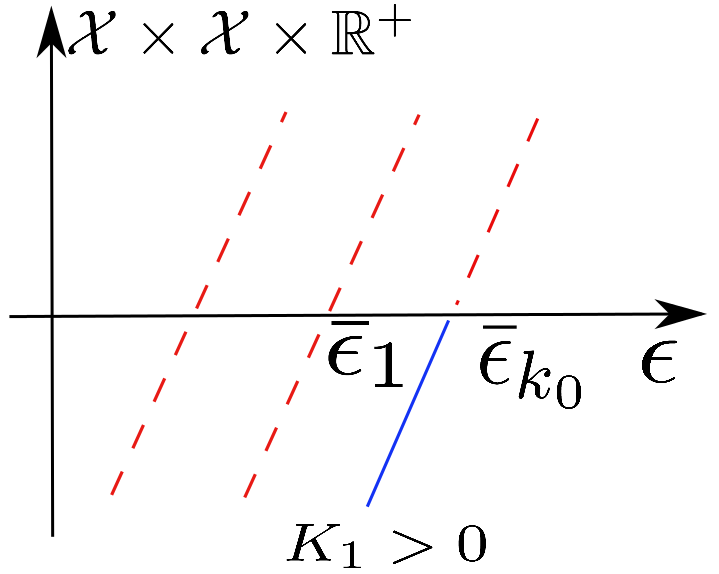}\caption*{Super-transcritical}
\endminipage
\minipage{.22\textwidth} \centering
  \includegraphics[width=1.3in]{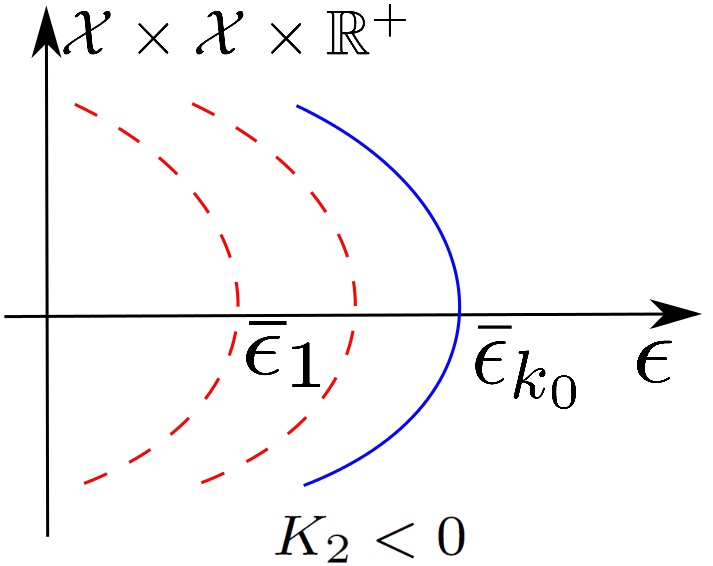}\caption*{Sub-pitchfork}
\endminipage
\minipage{.22\textwidth}\centering
  \includegraphics[width=1.3in]{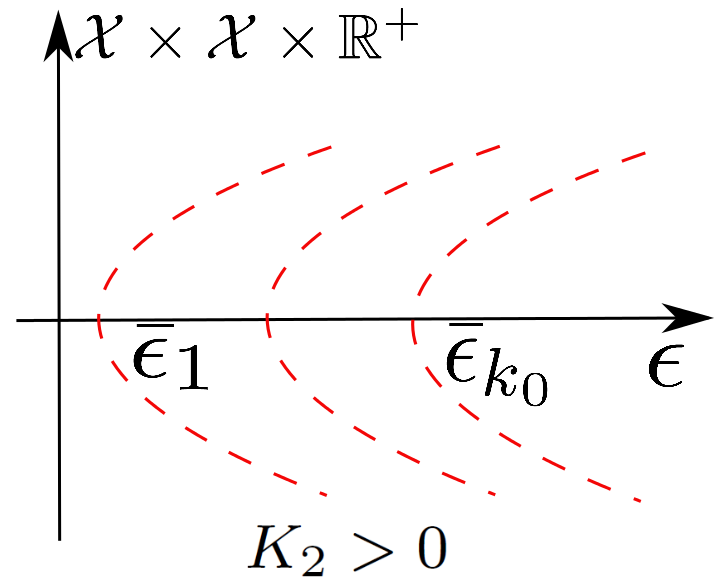}\caption*{Super-pitchfork}
\endminipage
\caption{Transcritical and pitchfork bifurcations to system (\ref{41}).  Stable bifurcation branches are presented in solid lines and unstable branches are in dashed lines.  The only stable branch must be the portion of the rightmost one that turns to the left.}\label{fig0}
\end{figure}
\begin{proof}
To study the stability of $(A_k(s,x),\rho_k(s,x))$, we linearize (\ref{12}) around this steady state and obtain the eigenvalue problem
\begin{equation}\label{56}
D_{(A,\rho)}\mathcal F (A_{k}(s,x),\rho_{k}(s,x),\bar \epsilon_{k}(s))(A,\rho)=\mu_k(s) (A,\rho),~(A,\rho)\in \mathcal X \times \mathcal X.
\end{equation}

We first show that $(A_k(s,x),\rho_k(s,x))$, $s\in(-\delta,\delta)$ is always unstable for all $k\neq k_0$ and to this end we shall only need to show that eigenvalue $\mu_k(s)$ has positive real part.  According to Corollary 1.13 in \cite{CR2}, $\mu_k=\mu_k(s)$ is a smooth function of $s: (-\delta,\delta) \rightarrow \mathbb R$.  Sending $s\rightarrow 0$, we see that (\ref{56}) becomes
\begin{equation}\label{57}
\left\{
\begin{array}{ll}
\bar \epsilon_k(\eta(\bar A)+\eta'(\bar A)\bar B)\Delta A+(\bar\rho-1) A+\bar A\rho=\mu_k(0) A, &x\in\Omega,\\
\Delta\rho-\frac{2\bar \rho f'(\bar A)}{f(\bar A)}\Delta A-\lambda_0\bar\rho A-\lambda_0\bar A\rho=\mu_k(0) \rho, &x\in\Omega,\\
\frac{\partial A}{\partial\mathbf{n}}=\frac{\partial \rho}{\partial\mathbf{n}}=0, &x\in\partial\Omega.
\end{array}
\right.
\end{equation}
Multiplying (\ref{57}) by $\Phi_k$ and integrating it over $\Omega$ by parts, we have that
\begin{equation*}
\left(
\begin{array}{cc}
-\bar \epsilon_k(\eta(\bar A)+\eta\prime(\bar A)\bar B)\sigma_k+\bar\rho-1-\mu_k(0)& \bar A  \\
\frac{2\bar\rho f'(\bar A)}{f(\bar A)}\sigma_k-\lambda_0\bar\rho& -\sigma_k-\lambda_0\bar A-\mu_k(0) \\
\end{array}
\right)
\left(
\begin{array}{cc}
\int_\Omega A \Phi_k dx\\
\int_\Omega \rho \Phi_k dx
\end{array}
\right)=
\left(
\begin{array}{cc}
0\\
0
\end{array}
\right),
\end{equation*}
therefore $\mu_k(0)$ is an eigenvalue of the characteristic polynomial $g_k(\mu)=\mu^2+\text{Tr}_k\mu+\text{Det}_k$, where
 \[\text{Tr}_k=\bar \epsilon_k\left(\eta(\bar A)+\eta\prime(\bar A)\bar B\right)\sigma_k+1-\bar\rho+\sigma_k+\lambda_0\bar A\]
 and
\[\text{Det}_k=\left(\bar \epsilon_k\left(\eta(\bar A)+\eta\prime(\bar A)\bar B\right)\sigma_k+1-\bar\rho\right)\left(\sigma_k+\lambda_0\bar A\right)-\bar A\big(\frac{2\bar\rho f'(\bar A)}{f(\bar A)}\sigma_k-\lambda_0\bar\rho\big).\]
$\text{Tr}_k>0$ since $\bar \rho =\frac{\bar B}{A^0+\bar B}<1$ and $\text{Det}_k<0$ since $\bar \epsilon_k<\bar \epsilon_{k_0}=\max_{k\in \mathbb N^+} \bar \epsilon_k$, therefore for any $k\neq k_0$, the characteristic polynomial $g_k(\mu)=0$ always has one positive root hence (\ref{57}) must have an eigenvalue $\mu_k(0)$ that has a positive real part.  From the standard eigenvalue perturbation theory in \cite{Ka}, (\ref{56}) always has a positive root $\mu_k(s)$ for each $k\neq k_0$ when $s$ is small and this verifies the instability of $(A_k(s,x),\rho_k(s,x))$ around $(\bar A,\bar\rho_k)$.

To show that $(A_{k_0}(s,x),\rho_{k_0}(s,x))$ on bifurcation branch $\Gamma_{k_0}(s)$ are asymptotically stable, it suffcies to show that the real parts of all eigenvalues of $D_{(A,\rho)}\mathcal F (A_{k_0}(s,x),\rho_{k_0}(s,x),\bar \epsilon_{k_0}(s))$ in (\ref{56}) are negative.  First of all, applying the same arguments that lead to the Fredholmness for Theorem \ref{theorem41}, we can show that 0 is a K--simple eigenvalue of $D_{(A,\rho)}\mathcal F (\bar A,\bar \rho,\bar \epsilon_{k_0})$--see Definition 1.2 in \cite{CR2}.  According to Corollary 1.13 in \cite{CR2}, there exists an interval $I$ with $\bar \epsilon_{k_0}\in I$ and continuously differentiable functions $\nu(\epsilon): I \rightarrow \mathbb R$ and $\mu_{k_0}(s): (-\delta,\delta) \rightarrow \mathbb R$ such that $\nu=\nu(\epsilon)$ is a real eigenvalue of
 \begin{equation}\label{58}
 D_{(A,\rho)}\mathcal F (\bar A,\bar \rho, \epsilon)(A,\rho)=\nu(\epsilon)(A,\rho),(A,\rho)\in \mathcal X \times \mathcal X
 \end{equation}
with $\nu(\bar \epsilon_{k_0})=0$ and $\mu_{k_0}=\mu_{k_0}(s)$ is an eigenvalue of (\ref{56}) with $\mu_{k_0}(0)=0$ ; moreover $\nu(\bar \epsilon_{k_0})$ is the only eigenvalue of (\ref{58}) for any fixed neighbourhood of the origin of the complex plane;  furthermore, the eigenfunction of (\ref{58}) depends on $\epsilon$ smoothly and can be written as $(A(\epsilon,x),\rho(\epsilon,x))$, which is uniquely determined by $(A(\bar \epsilon_{k_0},x),\rho(\bar \epsilon_{k_0},x))=(\bar A_{k_0},\bar \rho_{k_0})$ and $(A(\epsilon,x),\rho(\epsilon,x))-(\bar A_{k_0},\bar \rho_{k_0}) \in \mathcal Z$.

We now proceed to evaluate the sign of $\mu_{k_0}(s)$ for $s\in(-\delta,\delta)$.  Differentiating (\ref{58}) with respect to $\epsilon$ and putting $ \epsilon=\bar \epsilon_{k_0}$, we have that
\begin{equation}\label{59}
\left\{
\begin{array}{ll}
\bar \epsilon_{k_0}(\eta(\bar A)+\eta'(\bar A)\bar B)\Delta\dot{A}+(\bar\rho-1)\dot{A}+\bar A\dot{\rho}+(\eta(\bar A)+\eta'(\bar A)\bar B)\Delta \bar A_{k_0}=\dot{\nu}(\bar \epsilon_{k_0})\bar A_{k_0},\\
\Delta\dot{\rho}-\frac{2\bar \rho f'(\bar A)}{f(\bar A)}\Delta\dot{A}-\lambda_0\bar\rho\dot{A}-\lambda_0\bar A\dot{\rho}=\dot{\nu}(\bar \epsilon_{k_0})\bar \rho_{k_0},
\end{array}
\right.
\end{equation}
where $\dot A=\frac{\partial A(  \epsilon,x)}{\partial  \epsilon}\vert_{\epsilon=\bar \epsilon_{k_0}}$ and $\dot \rho=\frac{\partial \rho(\epsilon,x)}{\partial \epsilon}\vert_{\epsilon=\bar \epsilon_{k_0}}$.  Testing (\ref{59}) by $\Phi_{k_0}$ gives rise to
\begin{equation}\label{510}
\begin{split}
&\begin{pmatrix}
-\bar \epsilon_{k_0}(\eta(\bar A)+\eta\prime(\bar A)\bar B)\sigma_{k_0}+\bar\rho-1& \bar A  \\
\frac{2\bar\rho f'(\bar A)}{f(\bar A)}\sigma_{k_0}-\lambda_0\bar\rho& -\sigma_{k_0}-\lambda_0\bar A \\
\end{pmatrix}
\begin{pmatrix}
\int_\Omega \dot A \Phi_{k_0}dx \\
\int_\Omega \dot \rho \Phi_{k_0}dx
\end{pmatrix}\\
=&\begin{pmatrix}
\dot \nu(\bar \epsilon_{k_0})Q_{k_0}+\bar \epsilon_{k_0}(\eta(\bar A)+\eta'(\bar A)\bar B)Q_{k_0}\\
\dot\nu(\bar \epsilon_{k_0}),
\end{pmatrix}
\end{split}
\end{equation}
where we have used the fact that $\int_\Omega \Phi^2_{k_0}dx=1$. The coefficient matrix in (\ref{510}) is singular in light of (\ref{47}), therefore
\[\frac{\bar A}{-\sigma_{k_0}-\lambda_0\bar A}=\frac{\dot \nu(\bar \epsilon_{k_0})Q_{k_0}+\bar \epsilon_{k_0}(\eta(\bar A)+\eta'(\bar A)\bar B)Q_{k_0}}{\dot\nu(\bar \epsilon_{k_0})}\]
and consequently we can easily show that $\dot\nu(\bar \epsilon_{k_0})<0$.  According to (1.17) in Theorem 1.16 of \cite{CR2}, $\mu_{k_0}(s)$ and $-s\bar \epsilon'_{k_0}(s)\dot{\nu}(\bar \epsilon_{k_0})$ have the same zeros and the same signs near $s=0$, and for $\mu_{k_0}(s) \neq0$
\[\lim_{s\rightarrow 0}\frac{-s\bar \epsilon'_{k_0}(s)\dot{\nu}(\bar \epsilon_{k_0})}{\mu_{k_0}(s)}=1,\]
where the dot sign denotes the differentiation with respect to $\epsilon$.  Therefore, we have that $\text{sgn}(\mu_{k_0}(s))=\text{sgn}(K_1)$ if $K_1\neq0$ and $\text{sgn}(\mu_{k_0}(s))=\text{sgn}(K_2)$ if $K_1=0$ and $K_2\neq0$.  On the other hand, we already see from the analysis above that the other eigen-value of (\ref{56}) for $k=k_0$ is negative, therefore Theorem \ref{theorem51} readily follows from the arguments above.
\end{proof}
\begin{remark}
Theorem \ref{theorem51} indicates that, only the $k_0$--th bifurcation branch $\Gamma_{k_0}(s)$ around $(\bar A,\bar \rho)$ can be stable.  In other words, if a spatial pattern is stable, it must be on the branch $\Gamma_{k_0}(s)$ for which $\bar \epsilon_k$ is maximized over $\mathbb N^+$.  This selection of principal wavemode provides essential understandings of pattern formations in model (\ref{12}), i.e., stable patterns must develop in terms of the principal mode $\Phi_{k_0}$ if $\epsilon$ is taken to be smaller than but close to $\bar \epsilon_{k_0}$.  An important implication of this wavemode selection mechanism is that larger domain tends to support stable patterns with more modes.  To elucidate this result, we consider the one--dimensional domain $\Omega=(0,L)$ which has $\sigma_k=\big(\frac{k\pi}{L}\big)^2$ as its Neumann eigenvalue.  Denote $\bar \epsilon_{k_1}=\max_{k\in \mathbb N^+} \bar \epsilon_k$ for $L=L_1$ and $\bar \epsilon_{k_2}=\max_{k\in \mathbb N^+} \bar \epsilon_k$ for $L=L_2$, then we must have that $k_1<k_2$ if $L_1<L_2$.  Similar results hold in multi-dimensional domains (under Dirichlet Boundary conditions).  Figure \ref{fig2} in Section \ref{section6} verifies this observation numerically.  Such wavemode selection mechanism was found for a volume filling chemotaxis model with logistic growth in \cite{MOW}.
\end{remark}

\subsection{Bifurcation of transcritical type}
We proceed to find the values of $K_1$ and $K_2$ given in (\ref{51}).  To this end, we first give the following Taylor expansions from straightforward calculations
\begin{equation}\label{511}
\begin{split}
\eta(A)=&\eta(\bar A)+s\eta'(\bar A)Q_k\Phi_k+s^2\Big(\eta'(\bar A)\psi_1+\frac{1}{2}\eta''(\bar A)Q_k^2\Phi_k^2\Big)+s^3\Big(\eta'(\bar A)\psi_2\\
&+\eta''(\bar A)Q_k\psi_1\Phi_k+\frac{\eta'''(\bar A)}{6}Q_k^3\Phi_k^3\Big)+o(s^3).
\end{split}
\end{equation}
Substituting (\ref{51}) and (\ref{511}) into (\ref{41}), we collect the $s^2$--terms there to obtain
\begin{equation}\label{512}
\begin{split}
K_1(\eta(\bar A)+\eta'(\bar A)\bar B)Q_k\sigma_k\Phi_k=&(\eta(\bar A)+\eta'(\bar A)\bar B)\bar\epsilon_k\Delta\psi_1+(\bar\rho-1)\psi_1+\bar A\varphi_1\\
&+(2\eta'(\bar A)+\eta''(\bar A)\bar B)Q_k^2\bar\epsilon_k\vert\nabla\Phi_k\vert^2\\
&+(Q_k-(2\eta'(\bar A)+\eta''(\bar A)\bar B)Q_k^2\bar\epsilon_k\sigma_k)\Phi_k^2.
\end{split}
\end{equation}
\begin{proof}[Proof\nopunct]\emph{of Proposition} \ref{proposition51}.
Multiplying (\ref{512}) by $\Phi_k$ and then integrating it over $\Omega$ by parts, we obtain that
\begin{equation}\label{513}
\begin{split}
&K_1(\eta(\bar A)+\eta'(\bar A)\bar B)Q_k\sigma_k \\
=&\Big(\bar\rho-1-(\eta(\bar A)+\eta'(\bar A)\bar B)\bar\epsilon_k\sigma_k\Big)\int_{\Omega}\psi_1\Phi_kdx+\bar A\int_{\Omega}\varphi_1\Phi_kdx\\
&+(2\eta'(\bar A)+\eta''(\bar A)\bar B)Q_k^2\bar\epsilon_k\int_{\Omega}\Phi_k\vert\nabla\Phi_k\vert^2dx\\
&+(Q_k-(2\eta'(\bar A)+\eta''(\bar A)\bar B)Q_k^2\bar\epsilon_k\sigma_k)\int_{\Omega}\Phi_k^3dx\\
=&\big(\bar\rho-1-(\eta(\bar A)+\eta'(\bar A)\bar B)\bar\epsilon_k\sigma_k\big)\int_{\Omega}\psi_1\Phi_kdx+\bar A\int_{\Omega}\varphi_1\Phi_kdx\\
&+\big(Q_k-(\eta'(\bar A)+\frac{1}{2}\eta''(\bar A)\bar B)Q_k^2\bar\epsilon_k\sigma_k\big)\int_{\Omega}\Phi_k^3dx,
\end{split}
\end{equation}
where we have applied the fact $\int_{\Omega}\vert\nabla\Phi_k\vert^2\Phi_kdx=\frac{\sigma_k}{2}\int_{\Omega}\Phi_k^3dx$ because
\begin{equation*}
\begin{split}
\int_{\Omega}\Phi_k\vert\nabla\Phi_k\vert^2dx&=\frac{1}{2}\int_{\Omega}\nabla\Phi_k^2\cdot\nabla\Phi_kdx=\frac{1}{2}\int_{\Omega}
(\nabla\cdot(\Phi_k^2\nabla\Phi_k)-\Phi_k^2\Delta\Phi_k)dx\\
&=-\frac{1}{2}\int_{\Omega}\Phi_k^2\Delta\Phi_kdx=\frac{\sigma_k}{2}\int_{\Omega}\Phi_k^3dx.
\end{split}
\end{equation*}

Substituting (\ref{51}) into the $\rho$--equation of (\ref{41}), we collect the $s^2$--terms to have
\begin{equation}\label{514}
\begin{split}
\Delta\varphi_1&-\frac{2\bar\rho f'(\bar A)}{f(\bar A)}\Delta\psi_1-\lambda_0\bar\rho\psi_1-\lambda_0\bar A\varphi_1-\Big(\frac{2f'(\bar A)}{f(\bar A)}+2\bar\rho\Big(\frac{f''(\bar A)}{f(\bar A)}-\frac{f'(\bar A)^2}{f(\bar A)^2}\Big)Q_k\Big)Q_k\vert\nabla\Phi_k\vert^2\\
&+\Big(\big(\frac{2f'(\bar A)}{f(\bar A)}+2\bar\rho\Big(\frac{f''(\bar A)}{f(\bar A)}-\frac{f'(\bar A)^2}{f(\bar A)^2}\Big)Q_k\big)Q_k\sigma_k-\lambda_0Q_k\Big)\Phi_k^2=0.
\end{split}
\end{equation}
We test this equation by $\Phi_k$ over $\Omega$ and obtain
\begin{equation}\label{515}
\begin{split}
&\Big(\frac{2\bar\rho f'(\bar A)}{f(\bar A)}\sigma_k-\lambda_0\bar\rho\Big)\int_{\Omega}\psi_1\Phi_kdx-(\sigma_k+\lambda_0\bar A)\int_{\Omega}\varphi_1\Phi_kdx\\
=&\left(\lambda_0Q_k-\Big(\frac{f'(\bar A)}{f(\bar A)}+\bar\rho (\frac{f''(\bar A)}{f(\bar A)}-\frac{f'(\bar A)^2}{f(\bar A)^2})Q_k\Big)Q_k\sigma_k\right)\int_{\Omega}\Phi_k^3dx;
\end{split}
\end{equation}
on the other hand, $(\psi_1,\varphi_1)\in\mathcal{Z}$ in (\ref{412}) implies that
\begin{equation}\label{516}
\int_{\Omega}(Q_k\psi_1+\varphi_1)\Phi_kdx=Q_k\int_{\Omega}\psi_1\Phi_kdx+\int_{\Omega}\varphi_1\Phi_kdx=0
\end{equation}
and solving (\ref{515}) and (\ref{516}) gives us
\begin{equation}\label{517}
\int_{\Omega}\psi_1\Phi_kdx=\frac{\left(\lambda_0Q_k-(\frac{f'(\bar A)}{f(\bar A)}+\bar\rho (\frac{f''(\bar A)}{f(\bar A)}-\frac{f'(\bar A)^2}{f(\bar A)^2})Q_k)Q_k\sigma_k\right)\int_{\Omega}\Phi_k^3dx}{\frac{2\bar\rho f'(\bar A)}{f(\bar A)}\sigma_k-\lambda_0\bar\rho+(\sigma_k+\lambda_0\bar A)Q_k},
\end{equation}
and
\begin{equation}\label{518}
\int_{\Omega}\varphi_1\Phi_kdx=-Q_k\int_{\Omega}\psi_1\Phi_kdx.
\end{equation}
Substituting (\ref{517}) and (\ref{518}) into (\ref{513}), we can easily show (\ref{52}) and this concludes the proof of Proposition \ref{proposition51}.
\end{proof}

We see that $K_1$ is determined by the value of $\int_{\Omega}\Phi_k^3dx$ and system parameters.  By the same calculations, we can also verify $K_1$ given in (\ref{54}) holds for (\ref{42}).

\subsection{Bifurcation of pitchfork type}
If $\Omega$ has a geometry such that $\int_{\Omega}\Phi_k^3dx=0$, for example a finite interval or multi--dimensional rectangle, $\Phi_k$ is a cosine function or a product of cosine functions, which implies that $K_1=0$, therefore we need to find $K_2$ given in (\ref{53}) to determine the stability of the bifurcating solutions.
\begin{proof}[Proof\nopunct]\emph{of Proposition} \ref{proposition52}.
We equate the $s^3$--terms in (\ref{41}) to obtain that
\begin{equation}\label{519}
\begin{split}
&K_2(\eta(\bar A)+\eta'(\bar A)\bar B)Q_k\sigma_k\Phi_k\\
=&(2\eta'(\bar A)+\eta''(\bar A)\bar B)Q_k\bar\epsilon_k(\Delta\psi_1\Phi_k+2\nabla\psi_1\cdot\nabla\Phi_k-\sigma_k\psi_1\Phi_k)\\
&+(\eta(\bar A)+\eta'(\bar A)\bar B)\bar\epsilon_k\Delta\psi_2+(\bar\rho-1)\psi_2+\psi_1\Phi_k+\bar A\varphi_2\\
&+Q_k\varphi_1\Phi_k+\Big(\frac{\eta''(\bar A)}{2}+\frac{\eta'''(\bar A)}{6}\bar B\Big)Q_k^3\bar\epsilon_k(6\Phi_k\vert\nabla\Phi_k\vert^2-3\sigma_k\Phi_k^3).
\end{split}
\end{equation}
Multiply both hand sides of (\ref{519}) by $\Phi_k$ and integrate it over $\Omega$ by parts, then in light of the identity $\int_{\Omega}\Phi_k^2\vert\nabla\Phi_k\vert^2dx=\frac{\sigma_k}{3}\int_{\Omega}\Phi_k^4dx$, we can prove (\ref{53}) in Proposition \ref{proposition52}.
\end{proof}

$K_2$ in (\ref{53}) is determined by the integrals $\int_\Omega \psi_2 \Phi_k,\int_\Omega \varphi_2 \Phi_k, \int_\Omega \psi_1 \Phi_k^2$ and $\int_\Omega \varphi_1 \Phi_k^2$, as well as the system parameters.  For the sake of completeness, we proceed to evaluate these integrals.  Equating the $s^3$--terms of the second equation in (\ref{41}), we see that
\begin{equation}\label{520}
\begin{split}
&\Delta\varphi_2-\frac{2\bar\rho f'(\bar A)}{f(\bar A)}\Delta\psi_2-\lambda_0\bar A\varphi_2-\lambda_0\bar\rho\psi_2-\Big(\frac{2f'(\bar A)}{f(\bar A)}+2\bar\rho(\frac{f''(\bar A)}{f(\bar A)}-\frac{f'(\bar A)^2}{f(\bar A)^2})Q_k\Big)\Delta\psi_1\Phi_k\\
&-\Big(\frac{2f'(\bar A)}{f(\bar A)}+4\bar\rho (\frac{f''(\bar A)}{f(\bar A)}-\frac{f'(\bar A)^2}{f(\bar A)^2})Q_k\Big)\nabla\psi_1\nabla\Phi_k -\frac{2f'(\bar A)}{f(\bar A)}Q_k\nabla\varphi_1\nabla\Phi_k \\
&+\Big(2\bar\rho (\frac{f''(\bar A)}{f(\bar A)}-\frac{f'(\bar A)^2}{f(\bar A)^2})Q_k\sigma_k-\lambda_0\Big)\psi_1\Phi_k+\Big(\frac{2f'(\bar A)}{f(\bar A)}Q_k\sigma_k-\lambda_0Q_k\Big)\varphi_1\Phi_k\\
&-\Big(2(\frac{f''(\bar A)}{f(\bar A)}-\frac{f'(\bar A)^2}{f(\bar A)^2})+\bar\rho (\frac{f'''(\bar A)}{f(\bar A)}-\frac{3f'(\bar A)f''(\bar A)}{f(\bar A)^2}+\frac{2f'(\bar A)^3}{f(\bar A)^3})Q_k\Big)Q_k^2(2\Phi_k\vert\nabla\Phi_k\vert^2-\sigma_k\Phi_k^3)=0. \nonumber
\end{split}
\end{equation}
Similar as above, we test (\ref{520}) by $\Phi_k$ and have that
\begin{equation}\label{521}
\begin{split}
&\Big(\frac{2\bar\rho f'(\bar A)}{f(\bar A)}\sigma_k-\lambda_0\bar\rho\Big)\int_{\Omega}\psi_2\Phi_kdx-(\sigma_k+\lambda_0\bar A)\int_{\Omega}\varphi_2\Phi_kdx\\
=&\Big(\lambda_0-\frac{2f'(\bar A)}{f(\bar A)}\sigma_k-2\bar\rho (\frac{f''(\bar A)}{f(\bar A)}-\frac{f'(\bar A)^2}{f(\bar A)^2})Q_k\sigma_k\Big)\int_{\Omega}\psi_1\Phi_k^2dx+\lambda_0Q_k\int_{\Omega}\varphi_1\Phi_k^2dx\\
&+\frac{2f'(\bar A)}{f(\bar A)}\int_{\Omega}\psi_1\vert\nabla\Phi_k\vert^2dx-\frac{2f'(\bar A)}{f(\bar A)}Q_k\int_{\Omega}\varphi_1\vert\nabla\Phi_k\vert^2dx\\
&-\frac{1}{3}\Big(\bar\rho (\frac{f'''(\bar A)}{f(\bar A)}-\frac{3f'(\bar A)f''(\bar A)}{f(\bar A)^2}+\frac{2f'(\bar A)^3}{f(\bar A)^3})Q_k+2(\frac{f''(\bar A)}{f(\bar A)}-\frac{f'(\bar A)^2}{f(\bar A)^2})\Big)Q_k^2\sigma_k\int_{\Omega}\Phi_k^4dx,
\end{split}
\end{equation}
where we have applied the following identities which can be obtained through straightforward calculations,
 \[\int_{\Omega}\Delta\psi_1\Phi_k^2dx=2\int_{\Omega}\psi_1\vert\nabla\Phi_k\vert^2dx-2\sigma_k\int_{\Omega}\psi_1\Phi_k^2dx\]
and
\[\int_{\Omega}\nabla\psi_1\cdot\Phi_k\nabla\Phi_k dx=\sigma_k\int_{\Omega}\psi_1\Phi_k^2dx-\int_{\Omega}\psi_1\vert\nabla\Phi_k\vert^2dx.\]
On the other hand, since $(\psi_2,\varphi_2)$ satisfies (\ref{412}), we can evaluate $\int_\Omega \psi_2 \Phi_k $ and $\int_\Omega \varphi_2 \Phi_k $ in terms of $\int_\Omega \psi_1 \vert \nabla \Phi_k\vert^2 $ and $\int_\Omega \varphi_1 \vert \nabla \Phi_k\vert^2 $ in the rest part.

Multiplying (\ref{512}) and (\ref{514}) by $\vert\nabla\Phi_k\vert^2$ and then integrating them over $\Omega$ by parts, thanks to $K_1=0$ we have that
\begin{equation}\label{522}
\begin{split}
&(\eta(\bar A)+\eta'(\bar A)\bar B)\bar\epsilon_k\int_{\Omega}\Delta\psi_1\vert\nabla\Phi_k\vert^2dx+(\bar\rho-1)\int_{\Omega}\psi_1\vert\nabla\Phi_k\vert^2dx\\
+&\bar A\int_{\Omega}\varphi_1\vert\nabla\Phi_k\vert^2dx+(Q_k-(2\eta'(\bar A)+\eta''(\bar A)\bar B)Q_k^2\bar\epsilon_k\sigma_k)\int_{\Omega}\Phi_k^2\vert\nabla\Phi_k\vert^2dx\\
+&(2\eta'(\bar A)+\eta''(\bar A)\bar B)Q_k^2\bar\epsilon_k\int_{\Omega}\vert\nabla\Phi_k\vert^4dx=0\\
\end{split}
\end{equation}
and
\begin{equation}\label{523}
\begin{split}
&\int_{\Omega}\Delta\varphi_1\vert\nabla\Phi_k\vert^2dx-\frac{2\bar\rho f'(\bar A)}{f(\bar A)}\int_{\Omega}\Delta\psi_1\vert\nabla\Phi_k\vert^2dx\\
-&\lambda_0\bar\rho\int_{\Omega}\psi_1\vert\nabla\Phi_k\vert^2dx-\lambda_0\bar A\int_{\Omega}\varphi_1
\vert\nabla\Phi_k\vert^2dx-\lambda_0Q_k\int_{\Omega}\Phi_k^2\vert\nabla\Phi_k\vert^2dx\\
-&\Big(\frac{2f'(\bar A)}{f(\bar A)}Q_k+2\bar\rho (\frac{f''(\bar A)}{f(\bar A)}-\frac{f'(\bar A)^2}{f(\bar A)^2})Q_k^2\Big)\int_{\Omega}\vert\nabla\Phi_k\vert^2(\vert\nabla\Phi_k\vert^2-\sigma_k\Phi_k^2)dx=0,
\end{split}
\end{equation}
respectively, where we have applied the following fact
\begin{equation*}
\begin{split}
\int_{\Omega}\Delta\psi_1\vert\nabla\Phi_k\vert^2dx&=\int_{\Omega}(\nabla\cdot(\nabla\psi_1\vert\nabla\Phi_k\vert^2)-\nabla\psi_1\cdot\nabla\vert\nabla\Phi_k\vert^2)dx=-\int_{\Omega}\nabla\psi_1\cdot\nabla\vert\nabla\Phi_k\vert^2dx\\
&=2\sigma_k\int_{\Omega}\nabla\psi_1\cdot\Phi_k\nabla\Phi_kdx=\sigma_k\int_{\Omega}\nabla\psi_1\cdot\nabla\Phi_k^2dx\\
&=2\sigma_k^2\int_{\Omega}\psi_1\Phi_k^2dx-2\sigma_k\int_{\Omega}\psi_1\vert\nabla\Phi_k\vert^2dx.
\end{split}
\end{equation*}

To simplify (\ref{522}) and (\ref{523}), we want to apply the following identity
\begin{equation}\label{524}
\int_{\Omega}\vert\nabla\Phi_k\vert^4dx=\sigma_k^2\int_{\Omega} \Phi_k^4dx.
\end{equation}
which follows from straightforward calculations.  To see that (\ref{524}) holds, we multiply $2\vert\nabla\Phi_k\vert^4=\vert\nabla\Phi_k\vert^2\Delta\Phi_k^2+2\sigma_k\Phi_k^2\vert\nabla\Phi_k\vert^2$ by $\vert\nabla\Phi_k\vert^2$ and then integrate it over $\Omega$ by parts to have that
\begin{equation}\label{525}
\int_{\Omega}\vert\nabla\Phi_k\vert^4dx=\frac{1}{2}\int_{\Omega}\vert\nabla\Phi_k\vert^2\Delta\Phi_k^2dx+\sigma_k\int_{\Omega}\Phi_k^2\vert\nabla\Phi_k\vert^2dx.
\end{equation}
On the other hand, the Green's identities imply that
\begin{equation}\label{526}
\int_{\Omega}\vert\nabla\Phi_k\vert^2\Delta\Phi_k^2dx=2\sigma_k^2\int_{\Omega}\Phi_k^4dx-2\sigma_k\int_{\Omega}\Phi_k^2\vert\nabla\Phi_k\vert^2dx,
\end{equation}
therefore (\ref{524}) is an immediate consequence of (\ref{525}) and (\ref{526}).  Now we conclude from (\ref{522}) and (\ref{523}) that
\begin{equation}\label{527}
\begin{split}
&(\bar\rho-1-2(\eta(\bar A)+\eta'(\bar A)\bar B)\bar\epsilon_k\sigma_k)\int_{\Omega}\psi_1\vert\nabla\Phi_k\vert^2dx\\
&+\bar A\int_{\Omega}\varphi_1\vert\nabla\Phi_k\vert^2dx+2(\eta(\bar A)+\eta'(\bar A)\bar B)\bar\epsilon_k\sigma_k^2\int_{\Omega}\psi_1\Phi_k^2dx\\
=&-\Big(\frac{2}{3}(2\eta'(\bar A)+\eta''(\bar A)\bar B)Q_k^2\bar\epsilon_k\sigma_k^2+\frac{Q_k\sigma_k}{3}\Big)\int_{\Omega}\Phi_k^4dx
\end{split}
\end{equation}
and
\begin{equation}\label{528}
\begin{split}
&(\lambda_0\bar\rho-\frac{4\bar\rho f'(\bar A)}{f(\bar A)}\sigma_k)\int_{\Omega}\psi_1\vert\nabla\Phi_k\vert^2dx+(\lambda_0\bar A+2\sigma_k)\int_{\Omega}\varphi_1\vert\nabla\Phi_k\vert^2dx\\
&+\frac{4\bar\rho f'(\bar A)}{f(\bar A)}\sigma_k^2\int_{\Omega}\psi_1\Phi_k^2dx-2\sigma_k^2\int_{\Omega}\varphi_1\Phi_k^2dx\\
=&-\Big(\frac{2}{3}(\frac{2f'(\bar A)}{f(\bar A)}Q_k+2\bar\rho \Big(\frac{f''(\bar A)}{f(\bar A)}-\frac{f'(\bar A)^2}{f(\bar A)^2}\Big)Q_k^2)\sigma_k^2+\frac{\lambda_0Q_k\sigma_k}{3}\Big)\int_{\Omega}\Phi_k^4dx.
\end{split}
\end{equation}
Multiply (\ref{512}) with $K_1=0$ and (\ref{514}) by $\Phi_k^2$ and integrate them over $\Omega$ by parts.  Together with (\ref{527}) and (\ref{528}), we have that
\begin{equation}\label{529}
\begin{split}
&\left(
\begin{array}{cccc}
\bar\rho-1-2(\eta(\bar A)+\eta'(\bar A)\bar B)\bar\epsilon_k\sigma_k & \bar A & 2(\eta(\bar A)+\eta'(\bar A)\bar B)\bar\epsilon_k\sigma_k^2 & 0\\
\lambda_0\bar\rho-\frac{4\bar\rho f'(\bar A)}{f(\bar A)}\sigma_k & \lambda_0\bar A+2\sigma_k & \frac{4\bar\rho f'(\bar A)}{f(\bar A)}\sigma_k^2 & -2\sigma_k^2\\
2(\eta(\bar A)+\eta'(\bar A)\bar B)\bar\epsilon_k & 0 & \bar\rho-1-2(\eta(\bar A)+\eta'(\bar A)\bar B)\bar\epsilon_k\sigma_k & \bar A\\
\frac{4\bar\rho f'(\bar A)}{f(\bar A)} & -2 & \lambda_0\bar\rho-\frac{4\bar\rho f'(\bar A)}{f(\bar A)}\sigma_k & \lambda_0\bar A+2\sigma_k
\end{array}
\right)\\
\cdot&\left(
\begin{array}{cccc}
\int_{\Omega}\psi_1\vert\nabla\Phi_k\vert^2dx\\
\int_{\Omega}\varphi_1\vert\nabla\Phi_k\vert^2dx\\
\int_{\Omega}\psi_1\Phi_k^2dx\\
\int_{\Omega}\varphi_1\Phi_k^2dx
\end{array}
\right)=\left(
\begin{array}{cccc}
-(\frac{2}{3}(2 \eta'(\bar A)+\eta''(\bar A)\bar B)Q_k^2\bar\epsilon_k\sigma_k^2+\frac{Q_k \sigma_k}{3})\\
-(\frac{2}{3}(\frac{2f'(\bar A)}{f(\bar A)}Q_k+2\bar\rho (\frac{f''(\bar A)}{f(\bar A)}-\frac{f'(\bar A)^2}{f(\bar A)^2})Q_k^2)\sigma_k^2+\frac{\lambda_0Q_k\sigma_k}{3})\\
(\frac{2}{3}(2 \eta'(\bar A)+\eta''(\bar A)\bar B)Q_k^2\bar\epsilon_k\sigma_k-Q_k)\\
(\frac{2}{3}(\frac{2f'(\bar A)}{f(\bar A)}Q_k+2\bar\rho (\frac{f''(\bar A)}{f(\bar A)}-\frac{f'(\bar A)^2}{f(\bar A)^2})Q_k^2)\sigma_k-\lambda_0Q_k)
\end{array}
\right) \int_{\Omega}\Phi_k^4dx .
\end{split}
\end{equation}
Through straightforward calculations, we have that
\begin{equation*}
\begin{split}
&\begin{vmatrix}
\begin{array}{cccc}
\bar\rho-1-2(\eta(\bar A)+\eta'(\bar A)\bar B)\bar\epsilon_k\sigma_k & \bar A & 2(\eta(\bar A)+\eta'(\bar A)\bar B)\bar\epsilon_k\sigma_k^2 & 0\\
\lambda_0\bar\rho-\frac{4\bar\rho f'(\bar A)}{f(\bar A)}\sigma_k & \lambda_0\bar A+2\sigma_k & \frac{4\bar\rho f'(\bar A)}{f(\bar A)}\sigma_k^2 & -2\sigma_k^2\\
2(\eta(\bar A)+\eta'(\bar A)\bar B)\bar\epsilon_k & 0 & \bar\rho-1-2(\eta(\bar A)+\eta'(\bar A)\bar B)\bar\epsilon_k\sigma_k & \bar A\\
\frac{4\bar\rho f'(\bar A)}{f(\bar A)} & -2 & \lambda_0\bar\rho-\frac{4\bar\rho f'(\bar A)}{f(\bar A)}\sigma_k & \lambda_0\bar A+2\sigma_k
\end{array}
\end{vmatrix}\\
=&-\lambda_0\bar A\begin{vmatrix}
\begin{array}{cc}
\bar\rho-1-4(\eta(\bar A)+\eta'(\bar A)\bar B)\bar\epsilon_k\sigma_k & \bar A\\
\lambda_0\bar\rho-\frac{8\bar\rho f'(\bar A)}{f(\bar A)}\sigma_k & \lambda_0\bar A+4\sigma_k
\end{array}
\end{vmatrix}\\
=&\frac{3\lambda_0\bar A}{\sigma_k+\lambda_0\bar A}\left(4\Big(\frac{2\bar B}{f(\bar A)}f'(\bar A)+\bar\rho-1\Big)\sigma_k^2-5\lambda_0\bar A\sigma_k-\lambda_0^2\bar A^2\right),
\end{split}
\end{equation*}
then (\ref{529}) is solvable provided that
\[\frac{2\bar B}{f(\bar A)}f'(\bar A)+\bar\rho-1\neq \frac{\lambda_0\bar A(\lambda_0\bar A+5\bar \sigma_k)}{4\sigma_k^2}.\]
Finally by solving system (\ref{529}) we can evaluate all integrals in $K_2$ given by (\ref{53}).  Since the calculations are straightforward but extremely lengthy, we shall skip the details here.  We want to remark that, if $\sigma_{2k}=4\sigma_{k}$, this condition above is embedded by the necessary condition in Theorem \ref{theorem42} that $\epsilon_k\neq \epsilon_{2k}$.  This condition is always satisfied in the case when $\Omega=(0,L)$ for which $\sigma_k=(k\pi/L)^2$.

\section{Numerical simulations}\label{section6}
In this section, we perform extensive numerical simulations of models (\ref{12}) and (\ref{13}) over one--dimensional interval $\Omega=(0,L)$ and two--dimensional square $\Omega=(0,L)\times (0,L)$ to illustrate our theoretical results and to demonstrate the self--organized spatial temporal dynamics of the systems.  First of all, the Neumann Laplacian eigen-pair over the interval $\Omega=(0,L)$ is
\[\Big(\frac{k\pi}{L} \Big)^2\leftrightarrow \cos \frac{k\pi x}{L}, k\in \mathbb N^+.\]
We refer $\cos \frac{k\pi x}{L}$ as the wavemode and $k$ as the wavemode number for $\Omega=(0,L)$.  Similarly, $\cos \frac{m\pi x}{L} \cos \frac{n\pi y}{L}$ is a wavemode and $(m,n)$ is a wavemode pair for $\Omega=(0,L) \times (0,L)$.  Through numerical simulations, we are concerned with the effect of the diffusion rate $\epsilon$ and the domain size $L$ on our wavemode selection mechanism as well as the spatial-temporal behaviors, in particular the formation of stable aggregates.  Throughout the rest of this section, we choose the nonlinear diffusion and sensitivity functions to be $\eta(A)=1-e^{-A}$ and $f(A)=\log(A+1)$ respectively.  We shall call the stable aggregate in 1D a spike and that in 2D a hotspot in the sequel.

\subsection{1D numerics}

First of all, we explore models (\ref{12}) and (\ref{13}) over interval $\Omega=(0,L)$.  Theorem \ref{theorem41} and Theorem \ref{theorem42} state that the only stable wavemode of (\ref{12}) and (\ref{13}) is $\cos \frac{k_0\pi x}{L}$, where $k_0$ is a positive integer that maximizes $\bar \epsilon_k$ in (\ref{47}) and (\ref{414}) with $\sigma_k=(\frac{k\pi}{L})^2$, respectively.
\begin{table}[h!]
\centering
\begin{tabular}{ |p{1cm}|p{1cm}|p{1cm}|p{1cm}|p{1cm}|p{1cm}|p{1cm}|p{1cm}|}
\hline
$k$ & 1 &2& 3& 4& 5& 6 & 7 \\
\hline
$\bar \epsilon_k$ & 0.0335 &   0.0091 &   0.0041  &  0.0023  &  0.0015  &  0.0010 &   0.0008 \\
 \hline
\end{tabular}
\caption{List of bifurcation values $\bar \epsilon_k$ of system (\ref{12}) given in (\ref{47}).  $\bar \epsilon_1=\max_{k\in \mathbb N^+} \bar \epsilon_k$ and the stable wavemode is $\cos \pi x$.}
\label{tab1}
\end{table}
Taking $A^0=1$, $\bar B=2$, $\lambda_0=0.1$ and $L=1$ in model (\ref{12}), we present a list of bifurcation values $\bar \epsilon_k$ of (\ref{47}) in Table \ref{tab1}.  It is easy to see that $\max \bar \epsilon_k$ is achieved at $\bar \epsilon_1=0.0335$.  According to Theorem \ref{theorem51}, the wavemode that drives the instability of $(\bar A,\bar \rho)$ to model (\ref{12}) must be $\cos \pi x$, which is spatially monotone decreasing; wavemode $\cos k \pi x$ is always unstable for all $k\geq1$, therefore stable and monotone patterns must develop in the form of $\cos \pi x$.  We verify this selection mechanism in Figure \ref{fig1} numerically, where spatial-temporal solutions are plotted to illustrate the formations of stable steady states with boundary layer to system (\ref{12}) over $\Omega=(0,1)$, with time $t=0$, 100 and 500.  $\epsilon$ is selected to be 0.029, which is around the first bifurcation value $\bar \epsilon_1=0.0035$; the initial conditions are the small perturbations $(A_0(x),\rho_0(x))=(\bar A,\bar \rho)+(0.01,0.01)\cos 4\pi x$, which have a wavemode $\cos 4\pi x$. We see that $A(x,t)$ and $\rho(x,t)$ evolve according to the stable monotone mode $\cos \pi x$.
\begin{figure}[h!]
\centering
  \includegraphics[width=4.8in]{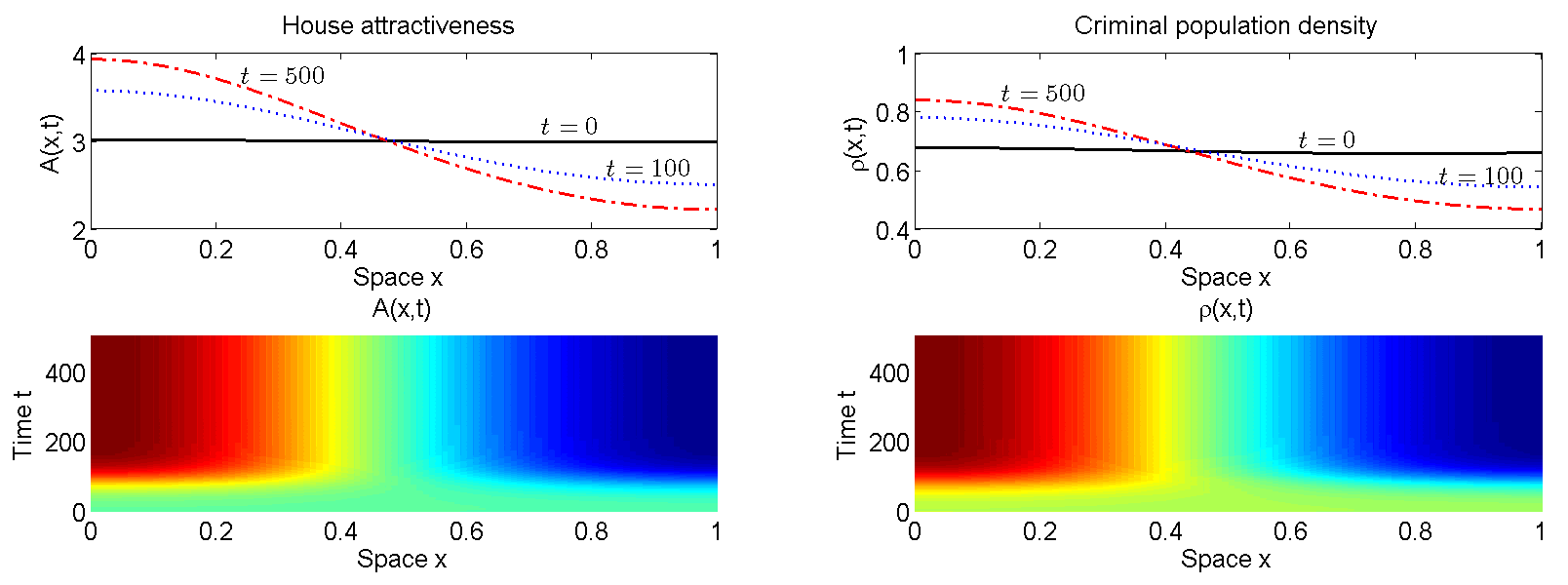}
 \caption{Emergence of stable monotone steady states to (\ref{12}).  Diffusion rate $\epsilon=0.029$ is close to the first bifurcation value $\bar \epsilon_1=0.0335$.}\label{fig1}
\end{figure}

\subsubsection{Effect of interval length on wavemode selection mechanism}
We next study the change of spatial profiles of stable steady states with respect to the variation of interval length.   Without loss of our generality, we only consider model (\ref{12}) here and the same results can be obtained for (\ref{13}).  First of all, we observe that if the interval length $L$ is sufficiently small, bifurcation value in (\ref{47}) satisfies
\[\bar \epsilon_k= \frac{\big(\frac{2\bar Bf'(\bar A)}{f(\bar A)}+\bar\rho-1\big)(\frac{k \pi}{L})^2-\lambda_0\bar A}{(\eta(\bar A)+\eta'(\bar A)\bar B)((\frac{k \pi}{L})^2+\lambda_0\bar A)(\frac{k \pi}{L})^2} \approx   \frac{\frac{2\bar Bf'(\bar A)}{f(\bar A)}+\bar\rho-1}{\eta(\bar A)+\eta'(\bar A)\bar B}   \Big(\frac{L}{k\pi}\Big)^2,k\in \mathbb N^+,\]
there $\max_{k\in \mathbb N^+}\bar \epsilon_k=\bar \epsilon_1$ and only the first wavemode $\cos \frac{\pi x}{L}$ can be stable, which is spatially monotone.  This implies that small intervals only support monotone stable patterns and large intervals support nonmonotone stable patterns.  Indeed, since both $A$ and $\rho$ satisfy Neumann boundary condition, we can construct nonmonotone solutions by reflecting and periodically extending the monotone solution at the boundary.  For example, if $(A(x,t),\rho(x,t))$ is a monotone solution to (\ref{12}) over $(0,1)$, then $(A(2-x,t),\rho(2-x,t))$ is a solution over $(1,2)$, therefore we have a solution over $(0,2)$ which is nonmonotone spatially.  On the other hand, since $\sigma_k=(\frac{k\pi}{L})^2$, wave number $k$ and interval length $L$ must be proportional when $\bar \epsilon_k$ achieves its maximum.  In general terms, if $\cos \frac{k_0 \pi x}{L}$ is the wavemode for $(0,L)$, then the wavemode for $(0,2L)$ must be $\cos \frac{2k_0 \pi x}{2L}$, which has a wave number $2k_0$.  All these observations are consistent with our wavemode selection mechanism.  The same assertions can be made about $\bar \epsilon_k$ in (\ref{413}) and system (\ref{13}).

In Table \ref{tab2}, we list the wave numbers and maximum bifurcation values for different values of $L$.  Then wavemode of (\ref{12}) and (\ref{13}) takes the form $\cos \frac{k_0 \pi x}{L}$ and we expect stable patterns to develop in this form.  Numerical simulations are performed in Figure \ref{fig2} to verify our analysis.
\begin{table}[h!]
  \centering
  \begin{tabular}{c|c|c|c|c|c|cc}
  \hline
   Domain size $L$ & &3& 5& 7& 9 & 11    \\
    \hline
    \multirow{2}{*}{Model (\ref{12})} & $k_0$ & 1 & 2 & 3 & 4 & 5   \\
    \hhline{~-------}
    & $\bar \epsilon^{(\ref{12})}_{k_0}$ & 0.0781 & 0.1004 & 0.1003  &0.0991 & 0.0981 \\
    \hline
    \multirow{2}{*}{Model (\ref{13})} & $k_0$ & 1 & 2 & 3 & 4 & 5 \\
    \hhline{~-------}
    & $\bar \epsilon^{(\ref{13})}_{k_0}$ & 0.0963& 0.1239 & 0.1238 &0.1224 &0.1221   \\
    \hline
  \end{tabular}
   \begin{tabular}{c|c|c|c|c|c|cc}
  \hline
   Domain size $L$ &  & 13& 15& 17& 19 &21  \\
    \hline
    \multirow{2}{*}{Model (\ref{12})} & $k_0$ & 5  & 6 &7 &8 &9 \\
    \hhline{~-------}
    & $\bar \epsilon^{(\ref{12})}_{k_0}$ & 0.0988&0.1004&0.1008&0.1006 &0.1001\\
    \hline
    \multirow{2}{*}{Model (\ref{13})} & $k_0$ & 5 & 6 &7 &8&9\\
    \hhline{~-------}
    & $\bar \epsilon^{(\ref{13})}_{k_0}$ &0.1219 &0.1239 &0.1243 & 0.1242 &0.1238 \\
    \hline
  \end{tabular}
\caption{Wavemode number that maximizes the bifurcation value for different intervals.  We see that system (\ref{41}) and (\ref{42}) have the same wavemode number, which increases as domain size increases. The first bifurcation value $\max_{k\in \mathbb N^+} \bar \epsilon_k$ of system (\ref{12}) is smaller than that of system (\ref{13}).}
\label{tab2}
\end{table}

\begin{figure}[h!]
\centering
  \includegraphics[width=4.8in]{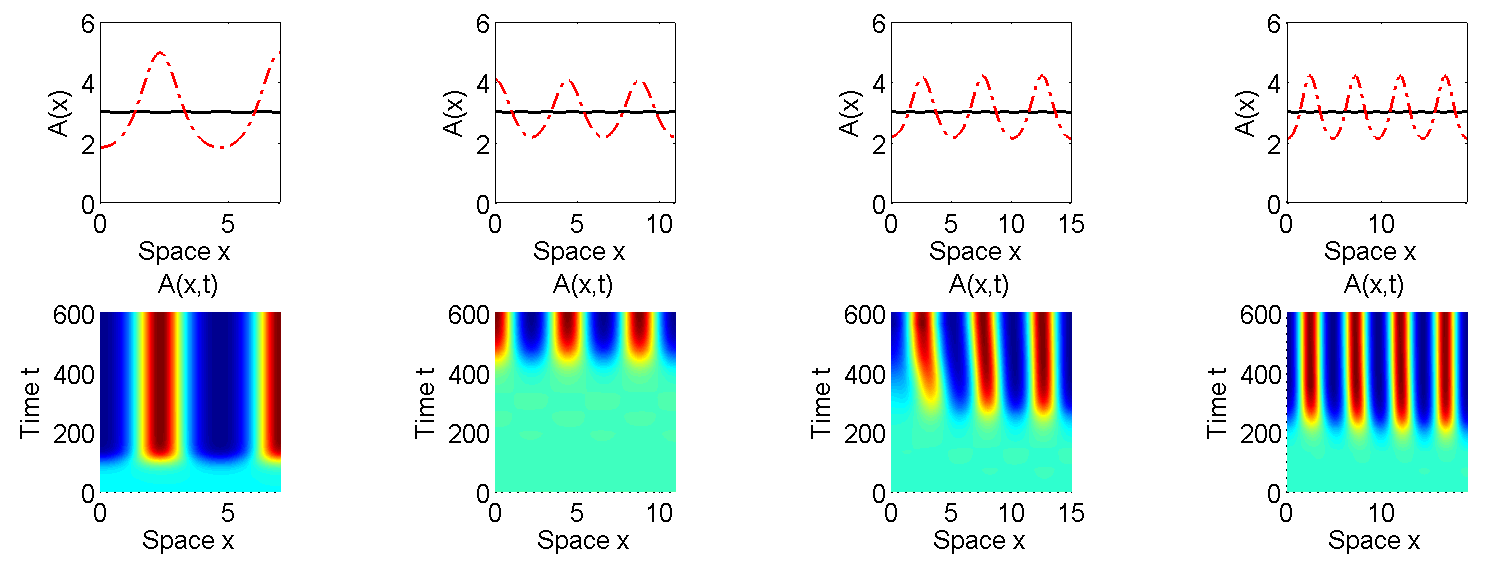}
 \caption{Emergence of stable multi-spike steady states to model (\ref{12}) with domain size $L=7$, 11, 15 and 19 from the left to the right.  Initial condition is plotted in solid curve and steady state in dash-dotted lines.  Parameters are $A^0=1$, $\bar B=2$, $\lambda=0.1$ and $\epsilon=0.09$.  According to our theoretical results and Table \ref{tab2}, the wavemodes are $\cos\frac{3\pi x}{7}$, $\cos\frac{ 5\pi x}{11 }$, $\cos\frac{6 \pi x}{15 }$ and $\cos\frac{8 \pi x}{19 }$, respectively.  Plots in this figure verify this exact wavemode selection mechanism.}\label{fig2}
\end{figure}

\subsubsection{Effect of small diffusion rate $\epsilon$}
In Figure \ref{fig3}, we plot the steady state of (\ref{12}) over $(0,1)$ for $\epsilon>0$ being sufficiently smaller than the maximum bifurcation value $\epsilon_{k_0}$, where all the rest parameters and initial data are taken to be the same as those in Figure \ref{fig1}.  Our numerical simulations show that both $A(x)$ and $\rho(x)$ are monotone decreasing for $\epsilon$ being small.  Moreover as $\epsilon$ shrinks to zero, steady state $A(x)$ approaches to a boundary spike in the form of a $\delta$-function and $\rho(x)$ to a boundary spike with bounded maximum value.  This result indicates that if the nearby victimization effect is extremely weak, one may expect the emergence of the clusterings of crime data.  However, rigorous analysis of the spiky solution is a quite delicate problem and it is out of the scope of our paper.  See \cite{KWW} for mathematical analysis on spiky solutions of (\ref{11}) for instance.
\begin{figure}[h!]
\centering
  \includegraphics[width=3.8in]{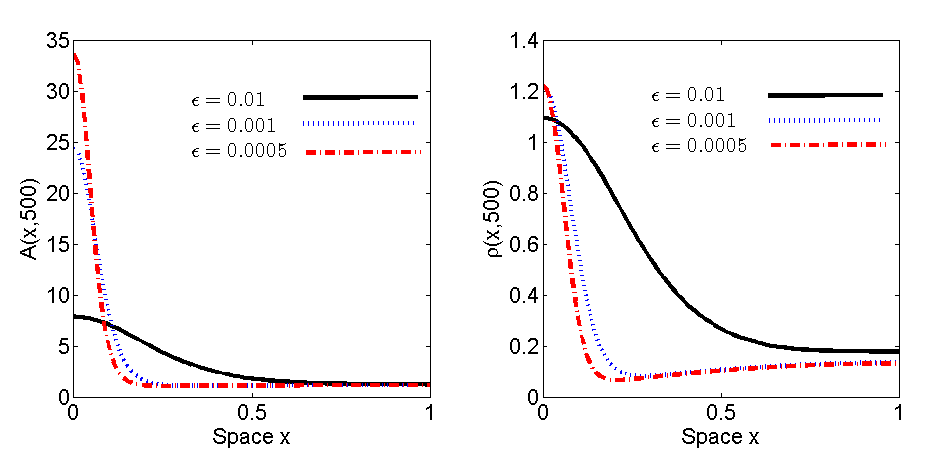}
 \caption{Plots of stable boundary spikes for small diffusion rate $\epsilon$.  Our numerical solutions suggest that as $\epsilon$ shrinks to zero $A(x)$ converges to a $\delta$-type boundary spike and $\rho(x)$ converges to a bounded boundary spike.}\label{fig3}
\end{figure}
\begin{figure}[h!]
\centering
  \includegraphics[width=4.8in]{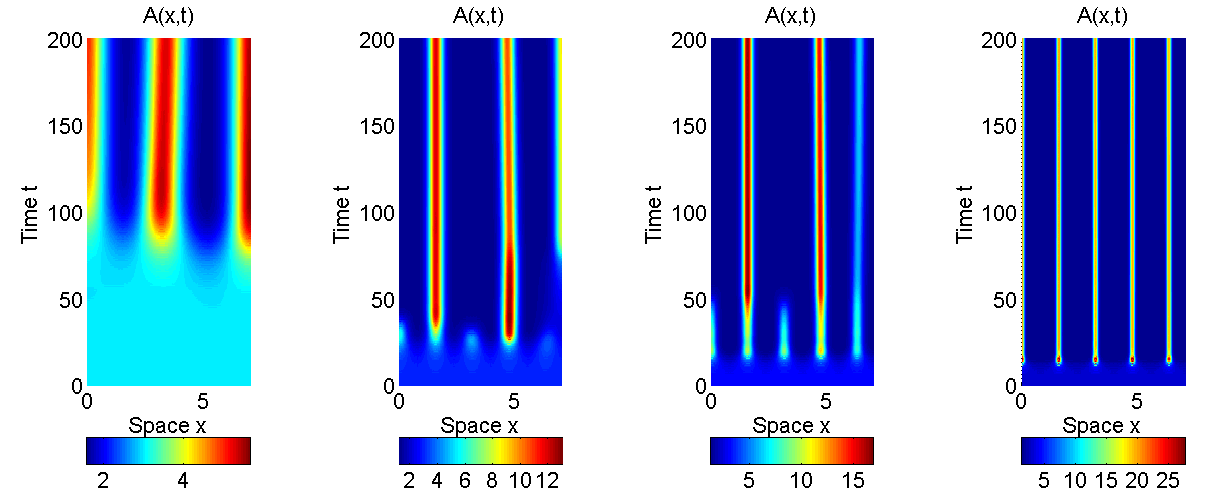}
 \caption{Formation of stable spikes of model (\ref{12}) for different values of $\epsilon$, which is taken to be 0.05, 0.01, 0.005 and 0.001 in each plot from the left to the right respectively.  Number of aggregates increases as $\epsilon$ shrinks to zero.}\label{fig4}
\end{figure}

\subsubsection{Difference between model (\ref{12}) and model (\ref{13})}

Next we compare the pattern formations in the departure--dependent system (\ref{12}) and the arrival--dependent system (\ref{13}).  We want to remind that both systems admit $(\bar A,\bar \rho)$ as their homogeneous steady state solutions.   To elucidate their differences, we choose the same parameters and initial data for both systems as those for  Figure \ref{fig2}.  We remind that, according to Table \ref{tab2}, both models have the same wavemode number $k_0$ which increases as domain size $L$ increases.  On the other hand, the first bifurcation value $\max_{k\in \mathbb N^+} \bar \epsilon_k$ of (\ref{12}) is always smaller than that of system (\ref{13}), therefore $(\bar A, \bar \rho)$ loses its stability in (\ref{12}) for smaller value of $\epsilon$ than (\ref{13}).  This suggests that, in light of simulations in Figure \ref{fig3}, (\ref{13}) can develop spikes which have larger amplitudes than (\ref{12}) for each fixed small $\epsilon$.  This is numerically illustrated in Figure \ref{fig5}, where we select $\epsilon$ to be far away from the $\bar \epsilon_{k_0}$.
 \begin{figure}[h!]
\centering
  \includegraphics[width=4.8in]{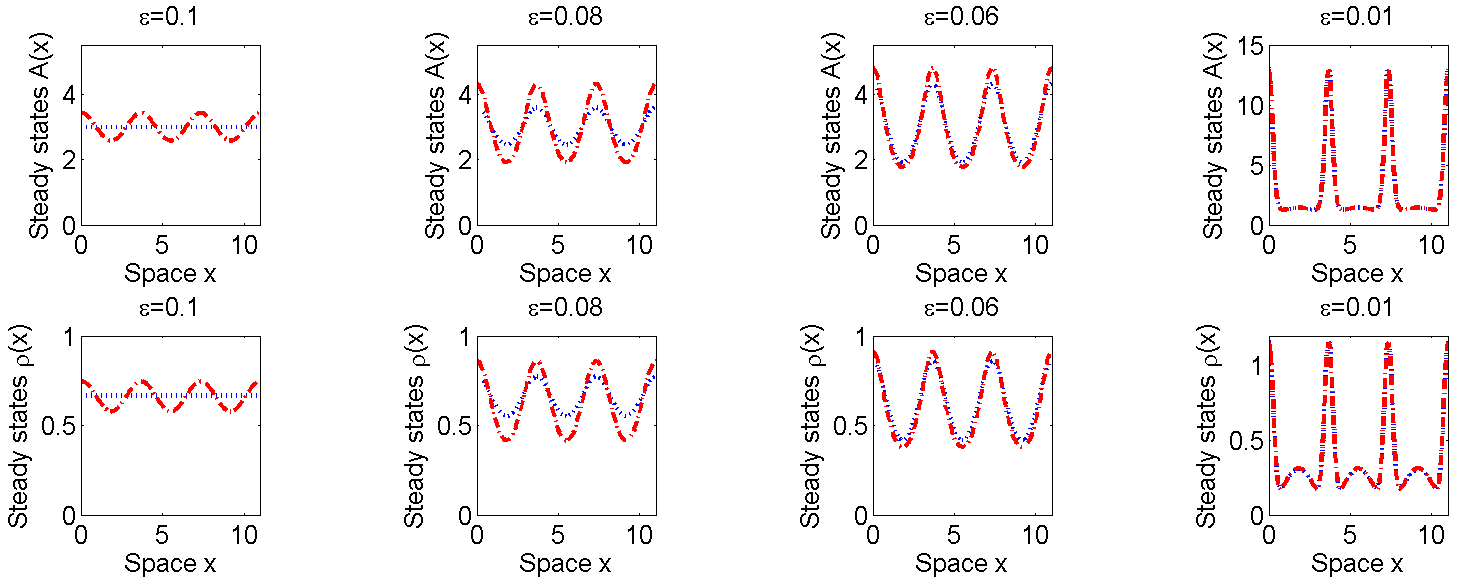}
 \caption{Stable steady states of systems (\ref{12}) and (\ref{13}) for different values of $\epsilon$.  Steady states of the departure--dependent model (\ref{12}) are given in dotted blue lines and steady states of the arrival--dependent model (\ref{13}) are given in dash--dotted red lines.  The amplitude of spikes of model (\ref{12}) is always larger than those of model (\ref{13}).   This suggests that the arrival--dependent model (\ref{13}) is more sensitive to the variation of near repeat victimization effect $\epsilon$ than the departure--dependent model (\ref{12}).  This is consistent with the linear stability results in Section \ref{section3}.  For $\epsilon$ being sufficiently small, both models have the same structures in their stable steady states; moreover, the stable patterns no longer fall into the category of our wavemode selection mechanism. }\label{fig5}
\end{figure}
In Figure \ref{fig5}, we numerically solve system (\ref{12}) and system (\ref{13}) over $(0,10)$ for different values of $\epsilon$.  Parameters are chosen to be $A^0=1$, $\bar B=3$ and the initial data are $A_0(x)=\bar A+0.01\cos \frac{\pi x}{4}$ and $\rho_0(x)=\bar \rho+0.01\cos 2\pi x$.  There are several conclusions that we can draw out of Figure \ref{fig5}.  First of all, the numerical simulations there verify that system (\ref{12}) and (\ref{13}) have the same wavemode section mechanism which is consistent with our theoretical analysis above.  See Table \ref{tab2} and the discussions there for example.  Moreover, the amplitude of patterns to (\ref{12}) is larger than that of (\ref{13}), while both systems have the same stable steady states for $\epsilon$ being sufficiently small.  We can also find that small $\epsilon$ tends to support the formation of stable steady states with spikes and amplitude of each spike increases as $\epsilon$ goes to zero.

\subsubsection{Merging and emerging of interior spikes}

In Figure \ref{fig6}, we observe the merging of spikes of house attractiveness for model (\ref{12}) in plot (a) and the emerging of new spikes for model (\ref{13} in plot (b)).  These patterning processes are referred to as coarsening processes which have been observed in chemotaxis model with logistic growth in \cite{PH}.  In plot (a), we choose $\epsilon=0.026$, $A^0=1$, $\bar B=9.11$ and $\lambda=0.1$.  Initial data are $(A_0(x),\rho_0(x)=(\bar A,\bar \rho)+(0.001,0.001) \cos(1.2 \pi x)$.  We observe that two spikes merge to form a single spike at around $x=8$ and $x=13$.  In plot (b), we choose $\epsilon=0.03$, $A_0=0.967$, $\bar B=13.09$ and $\lambda=0.1$.  Initial data are $(A_0(x),\rho_0(x)=(\bar A,\bar \rho)+(0.0007,0.0007) \cos(0.3 \pi x)$.  We observe that two spikes emerge from a single spike at around $x=8$ and $x=17.5$.  Shifting of spikes is observed in both figures.  Our numerical simulations also suggest that there is usually no coarsening when the domain size is small, which often occurs when domain size and $\bar A$ are large.
\begin{figure}[h!]
\centering
  \includegraphics[width=4.8in]{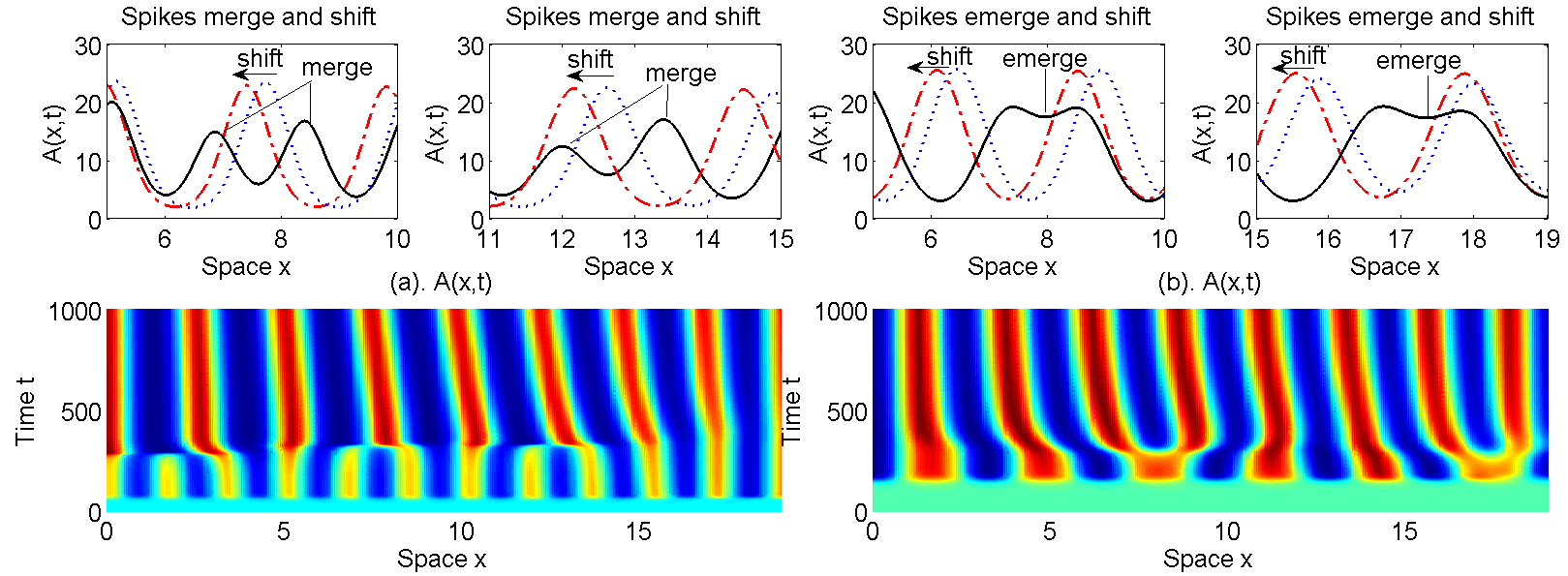}
 \caption{Merging and emerging of spikes.  In Figure (a) and Figure (b), we plot the merging and emerging of spikes to system (\ref{12}) and system (\ref{13}) respectively.  }\label{fig6}
\end{figure}

\subsection{2D numerics}

We now study models (\ref{12}) and (\ref{13}) over the two-dimensional domain $\Omega=(0,L)\times (0,L)$. In this special case, the Neumann eigen-pairs are
\[\Big(\frac{m \pi}{L}\Big)^2+\Big(\frac{m \pi}{L}\Big)^2\leftrightarrow \cos \frac{m\pi x}{L}\cos \frac{n\pi y}{L}, (m,n) \in \mathbb N^+ \times \mathbb N^+ \cup \{(1,0),(0,1)\}.\]
According to Theorem \ref{theorem41} and Theorem \ref{theorem42}, each eigenvalue above gives rise to a bifurcation value for models (\ref{12}) and (\ref{13}) when the Laplace eigen--value is simple.  However out simulations here indicates that these results hold even in the case when $\Omega$ is a square for which the eigen--value is not simple.  We also want to point out that the boundary of the square is not smooth at the corners, therefore the Neumann boundary condition and the bifurcation solutions can be interpreted as the weak solutions through the first Green’s identity.  One can show that the weak solutions are classical except at the corners by elliptic embeddings in the standard way.

\begin{table}[h!]
\centering
\begin{tabular}{|l|l|l|l|l|l|l|}
\hline
\multicolumn{7}{|c|}{Bifurcation values $\bar \epsilon_{mn}$}\\ \hline
    ~ & $m=0$ & $m=1$ & $m=2$ & $m=3$ & $m=4$ & $m=5$ \\ \hline
    $n=0$ & undefined &  0.0094   & 0.0091  &  0.0047 &   0.0028  &  0.0019 \\ \hline
    $n=1$ & 0.0094   & 0.0129  &  0.0077    &0.0043   & 0.0027&    0.0018 \\ \hline
    $n=2$ &0.0091  &  0.0077   & 0.0053 &   0.0034  &  0.0023  &  0.0016 \\ \hline
    $n=3$ & 0.0047 &   0.0043  &  0.0034   &0.0025   & 0.0019  &  0.0014 \\ \hline
    $n=4$ & 0.0028   & 0.0027   & 0.0023  &  0.0019  &  0.0015  &  0.0012 \\ \hline
    $n=5$ & 0.0019   & 0.0018 &   0.0016  &  0.0014  &  0.0012  &  0.0009\\ \hline
\end{tabular}
\caption{List of bifurcation values $\bar \epsilon_{mn}$ in (\ref{47}) for system (\ref{12}) over $\Omega=(0,1)\times(0,1)$.  Parameters are taken to be $A^0=1$, $\bar B=3$ and $\lambda=0.9$.  We see that $\bar \epsilon_{11}=\max \bar \epsilon_{mn}$ and the stable wavemode is $\cos \pi x \cos \pi y$.}
\label{tab3}
\end{table}

\subsubsection{Effect of domain size on wavemode selection mechanism}

In Figure \ref{fig7}, we plot the numerical solutions of system (\ref{12}) over the 2D domain $\Omega=(0,1)\times (0,1)$ to illustrate our result in Table \ref{tab3}.  The parameters are chosen to be $A^0=1$, $\bar B=3$ and $\lambda=0.9$.  The initial conditions are taken to be small perturbations from the homogeneous steady state $(\bar A, \bar \rho)$.
\begin{figure}[h!]
\centering
  \includegraphics[width=4.8in]{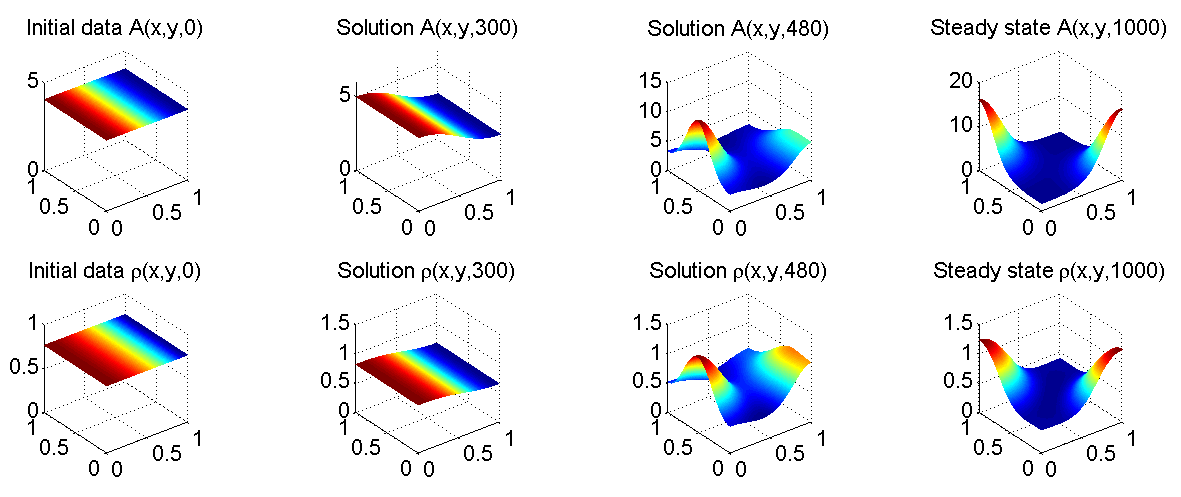}
 \caption{Formation of double-spike steady state.  We choose the parameters to be $\epsilon=0.008$, $A^0=1$, $\bar B=3$ and $\lambda=0.9$.  Stable generating wavemode is $\cos \pi x \cos \pi y$.  The formation of stable steady state corresponds to the wavemode selection as in our analysis.}\label{fig7}
\end{figure}

According to our theoretical results, the stable wavemode of (\ref{12}) over $\Omega=(0,L)\times (0,L)$  must be $\cos \frac{m_0 \pi x}{L} \cos \frac{n_0 \pi y}{L}$ for which $(m_0,n_0)\neq (0,0)$ maximizes $\bar \epsilon_{m_0n_0}$ in (\ref{47}), with $\sigma_k$ being replaced by $\big(\frac{m \pi}{L}\big)^2+\big(\frac{m \pi}{L}\big)^2$.  Models (\ref{12}) and (\ref{13}) admit nontrivial patterns that develop in form of this mode if $\epsilon$ is around $\bar \epsilon_{m_0n_0}$.  Similar as in 1D, by periodically reflecting and extending hotspot at the boundary of square $(0,L)\times (0,L)$, we can construct steady states with more spikes to larger domains $(0,mL)\times (0,nL)$, $(m.n)\in \mathbb N^+\times \mathbb N^+$.  This observation suggests that large domain supports more hotspots than small domain.  Actually, we can verify this by our wavemode section mechanism.  Table \ref{tab4} gives the wavemode series for a list of square domains with different side lengths.
\begin{table}[h!]
  \centering
  \begin{tabular}{p{1.8cm}|m{1.2cm}|m{0.85cm}|m{0.85cm}|m{0.85cm}|m{0.85cm}|m{0.85cm}|m{0.85cm}|m{0.85cm}|m{0.85cm} m{0.85cm} m{0.85cm}}
  \hline
   Domain size $L$ & & 2 &  3  & 4&  5 &6&7& 8&9  \\
    \hline
    \multirow{2}{*}{Model (\ref{12})} &$(m_0,n_0)$ &(2,2) & (4,0), (0,4) &(1,5), (5,1) & (4,5), (5,4) & (3,7), (7,3)& (4,8), (8,4)& (2,10), (10,2)&(7,9), (9,7) \\
    \hhline{~-----------}
    & $\bar \epsilon^{(\ref{12})}_{m_0,n_0}$ & 0.0129 & 0.0132 & 0.0133  &0.0133 & 0.0133& 0.0133&0.0133&0.0133 \\
    \hline
    \multirow{2}{*}{Model (\ref{13})} & $(m_0,n_0)$ &(2,2) & (4,0), (0,4) &(1,5), (5,1) & (4,5), (5,4) & (3,7), (7,3)& (4,8), (8,4)& (2,10), (10,2)&(7,9), (9,7) \\
    \hhline{~-----------}
    & $\bar \epsilon^{(\ref{13})}_{(m_0,n_0)}$ & 0.0145& 0.0148 & 0.0149 &0.0149 &0.0149 &0.0149 &0.0149 &0.0149  \\
    \hline
  \end{tabular}
\caption{Wavemode number that maximizes the bifurcation value $\bar \epsilon_{mn}$ of systems (\ref{12}) and (\ref{13}) for different values of domain size $L$.  The parameters are taken to be $A^0=1$, $\bar B=3$ and $\lambda=0.9$.  We see that model (\ref{12}) and (\ref{13}) have the same wavemode selection mechanism.  Moreover, the increase in domain size decreases the maximum bifurcation values for both models.  Complex patterns with aggregates usually emerge for large domains.}
\label{tab4}
\end{table}

\begin{figure}[h!]
\centering
  \includegraphics[width=4.8in]{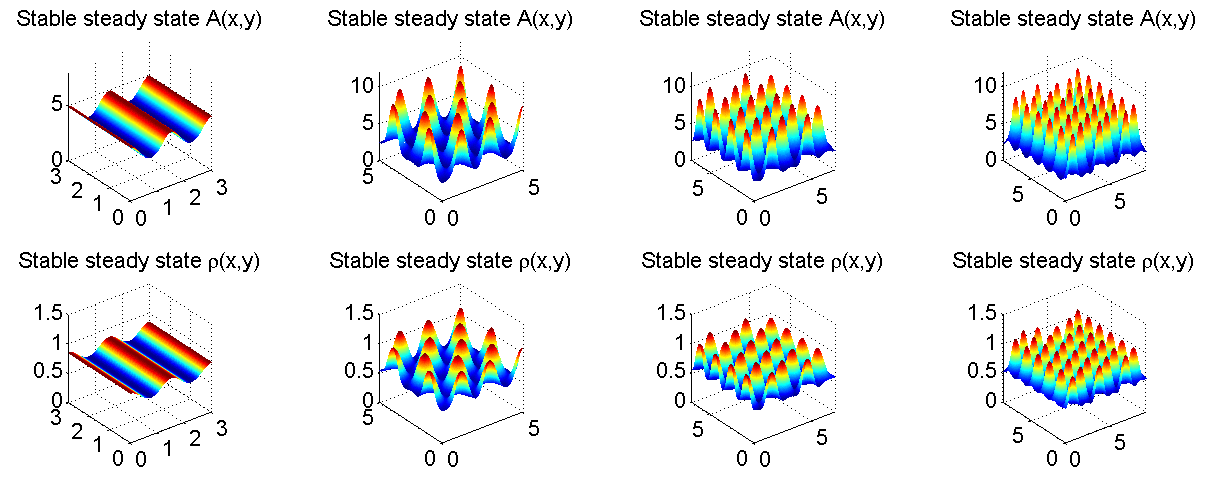}
 \caption{Stable hotspots to house attractiveness and criminal density of model (\ref{12}) for different values of domain size.  We choose the parameters to be $\epsilon=0.00127$, $A^0=1$, $\bar B=3$ and $\lambda=0.9$.  The initial conditions are taken to be $A_0(x,y)=\bar A+0.01\cos \pi x$ and $\rho_0(x,y)=\bar \rho+0.01\cos \frac{\pi x}{4} $.  According to our theoretical results and Table \ref{tab4}, the wavemodes are $\cos \frac{4\pi x }{3}$, $\cos \frac{ 4\pi x}{5}\cos \frac{ 5\pi y}{5 }$, $\cos \frac{ 4\pi x}{ 7}\cos \frac{ 8\pi y}{7 }$ and $\cos \frac{ 7\pi x}{ 9}\cos \frac{ 9\pi y}{ 9}$ respectively.  Stable steady states correspond to these wavemodes and verify selection mechanism in 2D.}\label{fig8}
\end{figure}

\subsubsection{Effect of small diffusion rate $\epsilon$}

Finally, we study in Figure \ref{fig9} the qualitative behaviors of stationary hotspots to model (\ref{12}) for $\epsilon$ being greatly smaller than its maximum bifurcation point $\bar \epsilon_{m_0n_0}$.  The parameters are taken to be the same as those in Figure \ref{fig8}.  Similar as in the 1D case, our numerical simulations suggest that small diffusion rate $\epsilon$ increases the number of stable hotspots and the magnitude of the aggregates.
\begin{figure}[h!]
\centering
  \includegraphics[width=4.8in]{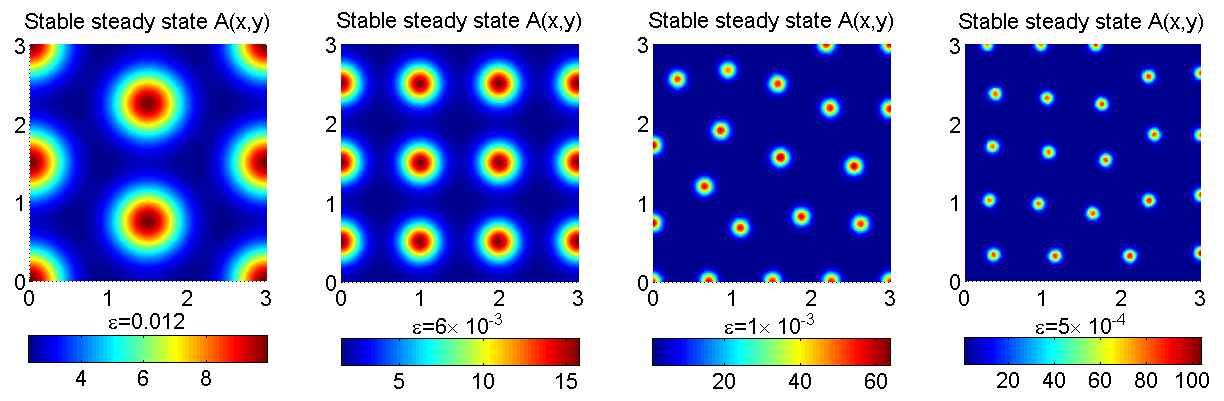}
 \caption{Stable hotspots to house attractiveness as diffusion rate $\epsilon$ shrinks to zero.  We surmise that $A(x,y)$ converges to a linear combination of $\delta$-functions as $\epsilon$ goes to zero.  Stable hotspots are also observed in the criminal population density.}\label{fig9}
\end{figure}

\section{Conclusions and Discussions}\label{section7}

In this paper, we study the stationary solutions to the general $2\times 2$ reaction--diffusion models (\ref{12}) and (\ref{13}) over multi-dimensional bounded domains subject to homogeneous Neumann boundary conditions.  These systems are generalizations of the urban crime model proposed by Short \emph{et al}.  Our modification is made based on the assumption that both the near--repeat victimization effect and perception of attractiveness are heterogeneous, i.e., dependent on the attractiveness of its current and/or neighbouring sites.

First of all, we carry out the linear stability analysis of the homogenous equilibrium $(\bar A,\bar \rho)$.  It is shown that this trivial solution loses its stability when the intrinsic near--repeat victimization effect $\epsilon$ becomes large.  Then we proceed to investigate the existence of nonhomogeneous steady state that bifurcates from the homogeneous steady state in the manner of Crandall--Rabinowitz local bifurcation theory.  Moreover, we perform the stability analysis of these nontrivial patterns by detailed calculations.  Compared to the bifurcation analysis in \cite{CCM}, our results treat a wider class of urban crime models; moreover, we find the exact formula of $K_2$ for the pitch-fork bifurcation when $K_1=0$, which was not done in \cite{CCM}.

Our stability results provide a selection mechanism of principal wavemode in the formations of nontrivial patterns.  To be precise, the only stable wavemode must be a Neumann eigenfunction $\Phi_{k_0}$, whose wavemode number $k_0$ is a positive integer that maximizes the bifurcation value $\bar \epsilon_k$ over positive integers; moreover, the pattern $(A_k(s,x),\rho_k(s,x))$ is unstable for all $k\neq k_0$.  Based on the stable wavemode selection mechanism, we can precisely predict the formation of stable patterns of the house attractiveness and criminal population density that have large amplitude such as boundary spikes, interior spikes in 1D and hotspots and hotstripes in 2D.

Numerical simulations have been performed to verify our theoretical results when the domain is a one--dimensional interval or a two--dimensional square.  There are also several important findings in our numerics.  First of all, they indicate that both small diffusion rate $\epsilon$ and large domain size support the emergence of stable aggregates.  Moreover, the amplitude of house attractiveness $A(x)$ and criminal population density $\rho(x)$ increase as $\epsilon$ decreases.  In particular, $A(x)$ approaches to a $\delta$-type function as $\epsilon$ shrinks to zero.  These results suggest that small nearby victimization effect tends to support the clustering of criminal data, and urban regions have much more complicated criminal behaviors than small rural areas.  We also observe that coarsening process ocuurs where two spikes merging into one new spike or one spike breaking into two separate spikes, in 1D simulations.

There are also some other unsolved questions concerning the stationary solutions of models (\ref{12}) and (\ref{13}).  Theorem \ref{theorem41}, \ref{theorem42} and Theorem \ref{theorem51} give us the existence and stability of their nontrivial positive steady states, which are small perturbation from the homogeneous equilibrium.  It is interesting to investigate the emergence of large amplitude solutions such as the spikes and hotspots demonstrated in our numerical simulations.   Another interesting but delicate problem is to investigate the stability of these aggregates.  Our bifurcation analysis is performed based on the local theory of \cite{CR} and it is interesting and important to study its global continuums when $\epsilon$ is away from the bifurcation point $\bar \epsilon_k$.  According to Remark \ref{remark41}, for each bifurcation branch, one has to determine whether its continuum is noncompact, i.e., approaches to infinity, or intersects the $\epsilon$--axis at $\epsilon^*$, which should not one of the bifurcation values.  However, this can be a quite challenging problem even over one--dimensional domains.

When $\epsilon$ is chosen to be close to the principal bifurcation value $\max_{k\in \mathbb N^+} \bar \epsilon_k$, our wavemode selection mechanism provides a very useful and effective way to predict the formation of spikes and hotspots in systems (\ref{12}) and (\ref{13}).  However, when $\epsilon$ is far away from the principal bifurcation value, i.e., being sufficiently small, rigorous mathematical analysis of the dynamics of the aggregates is needed to fully understand the pattern formation in these models.  For example, the mechanism that drives the merging and emerging of spikes in Figure \ref{fig6} is a delicate question that deserves future explorations.  From the viewpoint of mathematical analysis, it is also important to investigate dynamics of the time-dependent systems (\ref{12}) and (\ref{13}), including but not limited to questions such as global existence, convergence to the steady states, traveling wave solutions, etc.

\medskip
\medskip

\end{document}